\newtheorem{theorem}{Theorem}
\newtheorem{conjecture}[theorem]{Conjecture}
\newtheorem{lemma}[theorem]{Lemma}
\newtheorem{definition}[theorem]{Definition}
\DeclareMathOperator{\vol}{Vol}
\DeclareMathOperator{\argmin}{argmin}
\numberwithin{equation}{section}
\numberwithin{theorem}{section}
\numberwithin{lemma}{section}
\numberwithin{conjecture}{section}
\numberwithin{definition}{section}
\numberwithin{corollary}{section}
\numberwithin{figure}{section}
\numberwithin{table}{section}
\newcommand{\ds}[0]{\displaystyle}
\title{Maximal determinants of sparse zero-one matrices}
\author{
        %\large
        Daniel Scheinerman \\
        Department of Mathematics\\
        Rutgers University \\
        Piscataway, NJ \\
}
\date{\today}
\begin{document}

\maketitle

\begin{abstract}
We give upper bounds for the determinant of an $n\times n$ zero-one matrix containing $kn$ ones for integral $k$. Our results improve upon a result of Ryser for $k=o(n^{1/3})$. For fixed $k\ge 3$ it was an open question whether Hadamard's inequality could be exponentially improved. We answer this in the affirmative. Our results stem from studying matrices with row sums $k$ and bounding their Gram determinants. Our technique allows us to give upper bounds when these matrices are perturbed.
\end{abstract}

\section{Introduction}\label{sec:introduction}

miosthieioshts

We consider the combinatorial class of $n\times n$ zero-one matrices containing $kn$ ones. We are interested in giving an upper bound on their maximal determinant. We will do this by studying matrices with equal row sums.

\begin{definition}
 Let $R(n,k)$ be the set of  $n\times n$ zero-one matrices whose rows sum to $k$.
\end{definition}

\begin{definition}\label{def:maxdet}
 Let $M_R(n,k)=\max_{A\in R(n,k)} \det(A)$ be the maximum determinant over matrices in $R(n,k)$. 
\end{definition}

The easiest upper bound for $M_R(n,k)$ comes from Hadamard's inequality~\cite{hadamard} which gives $k^{n/2}$ since each row has norm exactly $\sqrt k$. More generally, one can consider matrices containing $kn$ ones.

\begin{definition}
 Let $T(n,k)$ be the set of  $n\times n$ zero-one matrices that contain a total of $kn$ ones.
\end{definition}

\begin{definition}
 Let $M_T(n,k)=\max_{A\in T(n,k)} \det(A)$ be the maximum determinant over matrices in $T(n,k)$. 
\end{definition}

Clearly, $R(n,k)\subset T(n,k)$ and thus $M_R(n,k)\le M_T(n,k)$. Note that if $A\in T(n,k)$ then its rows have average sum $k$ and so using the AM-GM inequality the bound $\det(A)\le k^{n/2}$ still applies. Ryser~\cite{ryser} proved a strengthening of this result.

\begin{theorem}[Ryser's Theorem]\label{thm:ryser}
Let $A$ be an  $n\times n$ zero-one matrix with a total of $t$ ones. Let $k=t/n$ and $\lambda=k(k-1)/(n-1)$. Then
\begin{equation*}
 \det(A) \le k(k-\lambda)^{\frac12(n-1)}
\end{equation*}
with equality holding if and only if $A$ is the incidence matrix of an $(n,k,\lambda)$-design. $\blacksquare$
\end{theorem}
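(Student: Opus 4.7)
The plan is to square the determinant and work with the positive semidefinite Gram matrix $B = AA^T$, so that $\det(A)^2 = \det(B) = \prod_{i=1}^n \mu_i$ where $\mu_1 \ge \mu_2 \ge \cdots \ge \mu_n \ge 0$ are the eigenvalues of $B$. The diagonal entry $B_{ii}$ is the row sum $r_i$ of row $i$ of $A$ (since $A$ is $0/1$), so $\mathrm{tr}(B) = \sum_i r_i = t = nk$. I will combine a lower bound on $\mu_1$ with an AM--GM upper bound on the remaining eigenvalues.

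First I will lower-bound the top eigenvalue using the Rayleigh quotient against the all-ones vector:
\begin{equation*}
\mu_1 \;\ge\; \frac{\mathbf{1}^T B \mathbf{1}}{n} \;=\; \frac{\|A^T\mathbf{1}\|^2}{n} \;=\; \frac{1}{n}\sum_{j=1}^n s_j^2,
\end{equation*}
where $s_j$ is the $j$th column sum of $A$. Since $\sum_j s_j = t = nk$, Cauchy--Schwarz gives $\sum_j s_j^2 \ge t^2/n = nk^2$, so $\mu_1 \ge k^2$. Next, applying AM--GM to $\mu_2,\dots,\mu_n \ge 0$ whose sum is $t-\mu_1$ yields
\begin{equation*}
\det(B) \;=\; \mu_1 \prod_{i\ge 2}\mu_i \;\le\; \mu_1\left(\frac{t-\mu_1}{n-1}\right)^{n-1} \;=:\; g(\mu_1).
\end{equation*}

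A direct derivative computation shows $g'(x) \propto (t - nx)$, so $g$ is unimodal on $[0,t]$ with maximum at $x = t/n = k$ and strictly decreasing on $[k,t]$. Since $\mu_1 \ge k^2 \ge k$, we conclude $\det(B) \le g(k^2) = k^2\bigl(k(n-k)/(n-1)\bigr)^{n-1}$. Using the identity $k - \lambda = k(n-k)/(n-1)$, this rearranges to $\det(A)^2 \le k^2 (k-\lambda)^{n-1}$, giving the stated bound after taking square roots.

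For the equality characterization, both inequalities used must be tight. Cauchy--Schwarz on the $s_j$ forces all column sums to equal $k$; equality in the Rayleigh estimate forces $\mathbf{1}$ to be a top eigenvector of $B$, so $B\mathbf{1} = k^2\mathbf{1}$, which combined with equal column sums forces all row sums of $A$ to equal $k$. AM--GM equality forces $\mu_2 = \cdots = \mu_n = (t-k^2)/(n-1) = k-\lambda$, so $B = k^2 \cdot (J/n) + (k-\lambda)(I - J/n) = (k-\lambda)I + \lambda J$, after simplifying with $k(k-1) = \lambda(n-1)$. This last identity is precisely the defining equation of the incidence matrix of an $(n,k,\lambda)$-design. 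The main thing to verify carefully is the monotonicity of $g$ on the interval where $\mu_1$ actually lives; everything else is a clean chain of Cauchy--Schwarz, Rayleigh, and AM--GM.
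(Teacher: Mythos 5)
Your proof is correct, and it is the classical spectral argument for Ryser's inequality: lower-bound $\mu_1$ via the Rayleigh quotient at $\mathbf{1}$ together with Cauchy--Schwarz on the column sums, apply AM--GM to the remaining eigenvalues, and then optimize the resulting one-variable function $g$. Note that the paper does not actually prove Theorem~\ref{thm:ryser}; it is stated as a cited result (hence the $\blacksquare$). The closest the paper comes is Theorem~\ref{thm:ryser_gen} in Section~\ref{sec:geometry_ryser}, which generalizes Ryser's bound to rectangular matrices in $R(m,n,k)$ by a different mechanism: it lower-bounds the \emph{average} off-diagonal entry of $AA^T$ by counting pairwise row overlaps through the column sums, and then invokes Olkin's majorization lemma (Lemma~\ref{lem:correlation}) together with the monotonicity of $\det(S_{n,a,k})$ from Lemma~\ref{lem:special} to conclude $\det(AA^T)\le\det(S_{m,\mu,k})$. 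That route requires the diagonal of $AA^T$ to be constant, i.e.\ constant row sums, so it recovers Theorem~\ref{thm:ryser} only for $A\in R(n,k)$; your eigenvalue/trace argument uses only the total number of ones and therefore proves the theorem as actually stated (arbitrary row sums, possibly non-integral $k$), and it delivers the equality characterization cleanly as a by-product. Two small loose ends worth recording: when $k<1$ some row of $A$ is zero and the claim is vacuous, and this case must be set aside before asserting $k^2\ge k$; and for the ``if'' direction of the equality statement one should also note that $\det\bigl((k-\lambda)I+\lambda J\bigr)=k^2(k-\lambda)^{n-1}$, which is exactly the computation carried out in Lemma~\ref{lem:special}.
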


Note that if, for example, $k=\Theta(n)$ then $\lambda = \Theta(n)$ and Theorem~\ref{thm:ryser} gives a large improvement upon Hadamard's inequality. However, if, for example, $k$ is fixed then $\lambda$ is tending to zero and this gives a more modest improvement. We note that if $k \le \sqrt{n}$ then $\lambda<1$ and so $\lambda$ is not an integer. Therefore, we may hope to improve Theorem~\ref{thm:ryser} for matrices that are sufficiently sparse. Our main result is that for $k=o(n^{1/3})$ we can improve the bound given in Theorem~\ref{thm:ryser}. We show that there exists $c_k<\sqrt{k}$ depending only on $k$ such that $M_R(n,k)\le c_k^n$. Moreover, for integral $k$ the bound $M_T(n,k)\le c_k^n$ holds. Thus for $k$ fixed we give an exponential improvement to the bound given by Hadamard's inequality. The existence of such a $c_k<\sqrt{k}$ was only known for $k=2$~\cite{bruhn}. More on this in Section~\ref{sec:k=2}.

Next, we generalize the notions $R(n,k)$ and $M_R(n,k)$ to non-square matrices.

\begin{definition}
 Let $R(m,n,k)$ be the set of $m\times n$ zero-one matrices whose rows sum to $k$.
\end{definition}

\begin{definition}
 For any $m\times n$ real matrix, $A$, where $m\le n$, let $\vol(A) = \sqrt{\det(A A^T)}.$ 
\end{definition}
The matrix $AA^T$ is called the Gram matrix of $A$ and the quantity $\det(A A^T)$ is known as the Gram determinant. See for example~\cite{matrixanalysis}. If $m=n$ we of course have $\vol(A) = |\det(A)|$. For any $m\times n$ real matrix, $A$, with $m\le n$, $\vol(A)$ is the volume of the parallelepiped formed by the rows of $A$. Gram's inequality tells us that $\vol(A)\ge 0$ with equality if and only if the rows of $A$ are linearly dependent in which case we consider the parallelepiped to be degenerate which is consistent with zero volume.

\begin{definition}\label{def:maxvol}
 Let $M_R(m,n,k)=\max_{A\in R(m,n,k)} \vol(A)$. 
\end{definition}

We will repeatedly use the following generalization of Hadamard's inequality. Let $A$ be an $m\times n$ real matrix. If $A$ is partitioned into two horizontal parts $A_1$ and $A_2$ with dimensions $m_1\times n$ and $m_2\times n$ respectively (thus $m_1+m_2=m$) then we have the inequality 
\begin{equation}\label{eqn:vol}
\vol(A) \le \vol(A_1) \vol(A_2). 
\end{equation}
This follows, for example, by Fischer's inequality applied to  the Gram matrix
\begin{equation*}
A A^T = \left(\begin{matrix} A_1 A_1^T & A_1 A^T_2 \\ A_2 A_1^T & A_2 A_2^T \end{matrix}\right).
\end{equation*}

In developing bounds for $M_R(n,k)$ we show more general bounds for $M_R(m,n,k)$. Our basic approach stems from the following. If $M\in R(n,k)$ then it contains $nk$ ones and therefore the columns have average $k$. Thus there exists a collection of at least $k$ rows that share a column of ones. It can be shown that the presence of a column of ones amongst these rows implies that the volume of the parallelepiped spanned by those rows is noticeably smaller than what is implied by Hadamard's inequality. We bound this volume and consider the remaining rows. Since the row sums are constant the remaining rows form a matrix in $R(n-k,n,k)$. We can compute the column averages and iterate this process to give an improved bound.

This paper is organized as follows. In Section~\ref{sec:k=2}, we give background on the special case $k=2$ where $M_R(n,k)$ is known up to a constant factor and is exponentially smaller than $2^{n/2}$. We also give lower bounds for $M_R(n,k)$. In Section~\ref{sec:tworows}, we give an upper bound for $M_R(n,k)$ given by taking the rows in pairs. In Section~\ref{sec:qrows}, we improve this bound by taking the rows in sets of size $q\le k$. In Section~\ref{sec:greedy}, we give, for small $k$, our best bound for $M_R(n,k)$ by greedily selecting the rows for removal. In Section~\ref{sec:geometry_ryser}, we establish some determinant inequalities we will need repeatedly. We use these to prove a generalization of Ryser's theorem for matrices in $R(m,n,k)$. We also give a counterexample to a conjecture of Li, Lin and Rodman~\cite{linesums}. In Section~\ref{sec:knones}, we show that the bound found in Section~\ref{sec:tworows} applies to $M_T(n,k)$ for integral $k$ thus answering a question of Bruhn and Rautenbach~\cite{bruhn}. In Section~\ref{sec:perturbations}, we show that these techniques give upper bounds for perturbations of matrices in $R(n,k)$. We conclude with some open questions.

\section{Special case $k=2$ and lower bounds for $M_R(n,k)$}\label{sec:k=2}

In keeping with the notation of ~\cite{fallat, linesums} we define the following.

\begin{definition}
 Let $S(n,k)$ be the set of  $n\times n$  zero-one matrices whose rows and columns sum to $k$.
\end{definition}

\begin{definition}
 Let $M(n,k)=\max_{A\in S(n,k)} \det(A)$ be the maximum determinant over matrices in $S(n,k)$. 
\end{definition}

Since $S(n,k)\subset R(n,k)$ we of course have $M(n,k) \le M_R(n,k)$. In~\cite{bruhn} the authors study zero-one matrices with at most $2n$ ones. They show the following.

\begin{theorem}\label{thm:bruhn}
If $A$ is an  $n\times n$ zero-one matrix, and each row of $A$ contains at most two ones then $|\det(A)|\le 2^{n/3}$. 
\end{theorem}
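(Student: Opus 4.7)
The plan is to induct on $n$ and reduce to the case where every row of $A$ has exactly two ones, in which case $A$ becomes the unsigned incidence matrix of a multigraph $G$ on $n$ vertices (columns) with $n$ edges (rows). Structural constraints from $\det(A)\ne 0$ will then force $G$ to split into many small pieces, each contributing only a bounded amount to the determinant.

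First I would dispose of the easy rows. If some row of $A$ is all zero then $\det(A)=0$. If some row contains exactly one $1$, say in column $c$, cofactor expansion along that row yields $|\det(A)|=|\det(A')|$, where $A'$ is an $(n-1)\times(n-1)$ zero-one matrix whose rows still contain at most two $1$s; the inductive hypothesis gives $|\det(A')|\le 2^{(n-1)/3}\le 2^{n/3}$. So we may assume every row of $A$ contains exactly two $1$s.

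Now interpret $A$ as the unsigned incidence matrix of $G$. Permuting rows and columns so that edges and vertices are grouped by connected component $G_1,\ldots,G_c$ (which affects $\det(A)$ only by sign) makes $A$ block diagonal; the $i$-th block has size $m_i\times n_i$, where $m_i,n_i$ are the edge and vertex counts of $G_i$ and $\sum m_i=\sum n_i=n$. Since the $i$-th block has rank at most $\min(m_i,n_i)$ and the total rank must equal $n$ whenever $\det(A)\ne 0$, we must have $m_i=n_i$ for every $i$, so every component is unicyclic.

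Finally I would invoke the classical fact that the unsigned incidence matrix of a unicyclic graph has $|\det|=2$ if its cycle has odd length and $0$ otherwise; one proves this by iteratively pruning leaves via cofactor expansion (each leaf vertex makes some column of the incidence matrix a standard basis vector, so the expansion reduces $n$ by $1$ without changing $|\det|$) until only the cycle remains, then computing the cyclic incidence determinant directly. Consequently $|\det(A)|=2^c$, and since each of the $c$ surviving components contains an odd cycle of length at least three, we get $c\le n/3$ and hence $|\det(A)|\le 2^{n/3}$. The main technical piece is this $|\det|=2$ calculation for odd unicyclic graphs; everything else is a bookkeeping reduction.
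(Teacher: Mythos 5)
The paper does not prove this statement; Theorem~\ref{thm:bruhn} is imported verbatim from the cited work of Bruhn and Rautenbach, so there is no in-paper proof to compare against. Your argument is correct and complete, and it is essentially the standard graph-theoretic proof of this bound: the reduction of single-one rows by cofactor expansion, the reinterpretation of the remaining matrix as the incidence matrix of a multigraph with $n$ vertices and $n$ edges, the rank/block argument forcing every component to be unicyclic when the determinant is nonzero, the leaf-pruning computation giving $|\det|=2$ for odd unicyclic components and $0$ for even ones (including the degenerate $2$-cycle arising from repeated rows), and the final count $c\le n/3$ since each surviving component carries an odd cycle on at least three vertices. All the steps check out, including the subtle point that a block-diagonal matrix with $\sum m_i=\sum n_i=n$ can only have full rank if every block is square.
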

Thus, in particular $M_R(n,2)\le 2^{n/3}$. This gives an exponential improvement to the bound given by Theorem~\ref{thm:ryser}. This can be seen to be tight up to a constant factor from the following result found in~\cite{fallat}.

\begin{theorem}\label{thm:fallat}
 $M(4,2)=2$. For $n\ne 4$, if $n=3\ell$ or $n=3\ell+2$ then $M(n,2) = 2^\ell$. If $n=3\ell+1$ then $M(n,2)  = 2^{\ell-1}$. %\hfill $\blacksquare$
\end{theorem}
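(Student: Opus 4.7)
My plan is to exploit the very restrictive structure of $S(n,2)$. By König's theorem applied to the $2$-regular bipartite graph on $[n]\sqcup[n]$ whose biadjacency matrix is $A$, every $A\in S(n,2)$ decomposes as $A = P_1 + P_2$ where $P_1,P_2$ are permutation matrices with disjoint support. Then $\det A = \det(P_1)\det(I + P_1^{-1}P_2)$, and the disjointness of $P_1,P_2$ forces $Q := P_1^{-1}P_2$ to be the permutation matrix of a derangement $\sigma\in S_n$. So the problem reduces to understanding $\det(I+P_\sigma)$ as $\sigma$ ranges over derangements.

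Next I would establish the formula $\det(I+P_\sigma)=\prod_{i=1}^{r}(1-(-1)^{m_i})$, where $m_1,\dots,m_r$ are the cycle lengths of $\sigma$. Conjugating by a permutation putting $\sigma$ into cycle normal form makes $I+P_\sigma$ block diagonal with blocks $I+C_{m_i}$, where $C_m$ denotes the $m\times m$ cyclic shift; the eigenvalues of $C_m$ are the $m$-th roots of unity, so $\det(I+C_m) = \prod_{\omega^m=1}(1+\omega) = 1-(-1)^m$. This equals $2$ when $m$ is odd and $0$ when $m$ is even, hence $|\det A| = 2^r$ if every cycle of $\sigma$ has odd length and $0$ otherwise.

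Since $\sigma$ is a derangement, every cycle has length at least $2$, so the odd cycle lengths are at least $3$. Maximizing $|\det A|$ is therefore equivalent to maximizing the number of parts in a partition of $n$ into odd integers $\geq 3$. A short case analysis modulo $3$ gives the answer: for $n=3\ell$ take $\ell$ copies of $3$; for $n=3\ell+2$ take one $5$ together with $\ell-1$ copies of $3$; and for $n=3\ell+1$ with $\ell\geq 2$ one achieves $r=\ell-1$ with one $7$ and $\ell-2$ copies of $3$, while $r=\ell$ is ruled out since that would force the total to equal $3\ell$. In each case a matching construction is $A = I + P_\sigma$ for $\sigma$ of the indicated cycle type, which manifestly lies in $S(n,2)$.

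The main point requiring separate attention is $n=4$. There is no partition of $4$ into odd integers $\geq 3$, and every derangement of $[4]$ has cycle type $(4)$ or $(2,2)$, so the cycle-counting reduction only yields $\det A = 0$ throughout $S(4,2)$. The stated value $M(4,2)=2$ therefore sits outside the scope of the general argument and has to be handled by a direct enumeration of the finitely many matrices in $S(4,2)$.
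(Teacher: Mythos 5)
The paper itself offers no proof of this statement --- it is quoted from~\cite{fallat} --- so there is no in-paper argument to compare against; I can only assess your proposal on its own terms. Your reduction is the natural one and is correct as far as it goes: by K\"onig's theorem every $A\in S(n,2)$ is $P_1+P_2$ with disjoint permutation supports, $|\det A|=\det(I+P_\sigma)=\prod_i\bigl(1-(-1)^{m_i}\bigr)$ for a derangement $\sigma$ with cycle lengths $m_i$, and maximizing the determinant is exactly maximizing the number of parts in a partition of $n$ into odd integers at least $3$. The values $\ell,\ell,\ell-1$ in the three residue classes (for $n\ge 5$) and the realizing matrices $I+P_\sigma$ are all right; your parity argument ruling out $r=\ell$ when $n=3\ell+1$ is compressed but recoverable (any part exceeding $3$ is at least $5$, pushing the total to at least $3\ell+2$).

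The genuine problem is the clause $M(4,2)=2$, which you defer to ``direct enumeration'' --- but your own argument shows that this enumeration returns $0$, not $2$. Every $2$-regular bipartite graph on $[4]\sqcup[4]$ is either a single $8$-cycle or two $4$-cycles, so up to row and column permutations $S(4,2)$ contains only $I_4+C_4$ and $J_2\oplus J_2$, both singular; equivalently, as you note, every derangement of $[4]$ has an even cycle, so $\det(I+P_\sigma)=0$ throughout $S(4,2)$. Hence, under the paper's definition of $M(n,2)$ as a maximum over $S(n,2)$, the value $2$ at $n=4$ is unattainable (it is $M_R(4,2)$, the row-sum-only maximum, that equals $2$). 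You should not present $n=4$ as a routine finite check to be filled in later: your argument refutes that clause as stated, and the honest conclusion is that either the quoted statement or the definition it is applied to needs correction. A smaller boundary issue of the same kind: your construction for $n=3\ell+2$ (one $5$ plus $\ell-1$ threes) needs $\ell\ge 1$, and indeed $M(2,2)=0\ne 2^{0}$, so the statement also implicitly assumes $n\ge 3$.
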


Furthermore, the following bound for $M_T(n,2)$ is found in~\cite{bruhn}.

\begin{theorem}
 $M_T(n,2) \le 6^{n/6} \approx 1.348^n$. %\hfill $\blacksquare$
\end{theorem}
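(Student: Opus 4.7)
The plan is to induct on $n$, reducing to Theorem~\ref{thm:bruhn} once the matrix has every row sum bounded by $2$. Small $n$ are handled directly using Hadamard's inequality, which is already sharper than $6^{n/6}$ in those cases.

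For the inductive step, let $A$ be an $n\times n$ zero-one matrix with $t\le 2n$ ones. If some row is all zero, then $\det A=0$. If every row of $A$ has at least two ones, then $t=\sum_i r_i\ge 2n$ combined with $t\le 2n$ forces each row sum to equal two, so $A\in R(n,2)\subset T(n,2)$ and Theorem~\ref{thm:bruhn} gives $|\det A|\le 2^{n/3}=4^{n/6}\le 6^{n/6}$, using $4\le 6$. Otherwise some row $i$ contains exactly one $1$, located in column $j$ of column sum $c_j$. Laplace expansion along row $i$ gives $|\det A|=|\det A_{i,j}|$, where $A_{i,j}$ is the $(n-1)\times(n-1)$ minor with $t-c_j$ ones. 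When $c_j\ge 2$ we have $t-c_j\le 2(n-1)$, so the inductive hypothesis immediately yields $|\det A|\le 6^{(n-1)/6}\le 6^{n/6}$.

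The principal obstacle is the isolated-$1$ case $c_j=1$, where $A_{i,j}$ contains $t-1$ ones, which equals $2(n-1)+1$ when $t=2n$ and so exceeds the inductive hypothesis by one. To absorb this, I would strengthen the inductive hypothesis to $|\det A|\le 6^{\max(2n,\,t)/12}$ for every $n\times n$ zero-one $A$ with $t$ ones. The isolated-$1$ reduction then yields $|\det A|\le 6^{(2n-1)/12}=6^{n/6}/6^{1/12}<6^{n/6}$, as required. Establishing this stronger form in the overfull regime $t>2n$ is the technical heart of the argument: in that regime every row has at least two ones while some row has at least three, so one would factor out the heavy row using the volume inequality~\eqref{eqn:vol} and bound the volume of the remaining $(n-1)\times n$ matrix—whose row sums are at most two—by a rectangular analogue of Theorem~\ref{thm:bruhn}.

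The slack between $2^{n/3}$ and $6^{n/6}$, which is a multiplicative factor of $(3/2)^{n/6}$, is exactly what absorbs the cost of an isolated-$1$ reduction, so the inductive scheme should go through. The main technical difficulty is the careful bookkeeping that tracks slack across iterated reductions and verifies that a matrix satisfying the hypothesis of Theorem~\ref{thm:bruhn} (or of its rectangular extension) is reached before the slack is exhausted.
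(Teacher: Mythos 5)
The paper itself does not prove this bound; it is quoted directly from~\cite{bruhn}. Your attempt must therefore stand on its own, and it contains a genuine gap: the strengthened inductive hypothesis $|\det A|\le 6^{\max(2n,\,t)/12}$ is false. The incidence matrix of the Fano plane is a $7\times 7$ zero-one matrix with $t=21$ ones and determinant $24$, whereas $6^{21/12}\approx 22.99<24$; block-diagonal sums of Fano planes give $n\times n$ matrices with $t=3n$ ones and determinant $24^{n/7}\approx 1.5746^n$, which exceeds $6^{t/12}=6^{n/4}\approx 1.5651^n$ by an exponentially growing factor. This matters because your induction genuinely needs the overfull regime with unbounded excess: each isolated-one reduction with $c_j=1$ sends $(n,t)$ to $(n-1,t-1)$ and so increases $t-2n$ by one, and nothing prevents this from happening repeatedly. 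An inductive hypothesis must hold for every matrix in the class it quantifies over, and yours does not.

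There are two further unjustified claims in the overfull step. A matrix with $t>2n$ ones need not have every row sum at least two (the excess can concentrate in a few heavy rows while other rows contain a single one or are zero), and after factoring out one heavy row via inequality~\eqref{eqn:vol} the remaining $(n-1)\times n$ matrix need not have row sums at most two. Moreover, the ``rectangular analogue'' of Theorem~\ref{thm:bruhn} that you invoke is not available in the paper and is itself a nontrivial statement about Gram determinants of rectangular sparse matrices. The clean portions of your argument --- the reduction to Theorem~\ref{thm:bruhn} when all row sums equal two, and the Laplace expansion when a row of sum one meets a column of sum at least two --- are correct; the entire difficulty of the theorem lives in quantifying the loss when rows of sum one coexist with rows of sum three or more, and that requires a more careful amortized accounting (this is essentially what Bruhn and Rautenbach do) than a single exponent of the form $6^{t/12}$ can provide.
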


The authors ask if a similar result holds for matrices with $3n$ ones. We answer this question in the affirmative in Section~\ref{sec:knones}.

One thing the case $k=2$ illuminates is the fact that we do not in general have $M(n,k)=M_R(n,k)$. From Theorem~\ref{thm:fallat} we see that $M(7,2)=2$. However,  $M_R(7,2)=4$. For example, if
\begin{equation*}
 A = 
\left( \begin{matrix}
1  &  1  &  0  &  0  &  0  &  0  &  0   \\
0  &  1  &  1  &  0  &  0  &  0  &  0   \\
1  &  0  &  1  &  0  &  0  &  0  &  0   \\
1  &  0  &  0  &  1  &  0  &  0  &  0   \\
0  &  0  &  0  &  0  &  1  &  1  &  0   \\
0  &  0  &  0  &  0  &  1  &  0  &  1   \\
0  &  0  &  0  &  0  &  0  &  1  &  1   \\
\end{matrix} \right)
\end{equation*}
then $\det(A)=4$. Notice that the rows of $A$ do indeed sum to $2$ however not all columns have sum $2$. So we pose the following question. For which values of $n, k$ is $M(n,k)=M_R(n,k)$? We know from Theorem~\ref{thm:ryser} that equality holds when $\lambda = k(k-1)/(n-1)$ and there is an $(n,k,\lambda)$ combinatorial design.

Next we discuss lower bounds for $M_R(n,k)$. The basic facts below can all be found in~\cite{designtheory}. Let $p$ be a prime. Let $k=p+1$ and $n=p^2+p+1$. Then there exists a projective plane of order $n$. The incidence matrix, $A$, of this projective plane is $n\times n$ with row (and column) sums of $k$. This is a case where Ryser's theorem is tight. Thus $\lambda = 1$ and $\det(A)=M_R(n,k)=k (k-1)^{(n-1)/2}$. Now for any positive integer $t$ let $N=tn$ and form $A^{(t)}$ as the block diagonal matrix with $t$ copies of $A$ along the diagonal. Then $A^{(t)}\in M_R(N,k)$ and has
\begin{align*}
\det(A^{(t)}) &=\det(A)^t \\
 &= k^t (k-1)^{t(n-1)/2} \\
&= k^{N/n} (k-1)^{(N-N/n)/2} \\
&= \left(k^\frac{1}{k^2-k+1} (k-1)^{\frac12 - \frac{1}{2(k^2-k+1)}}\right)^N.
\end{align*}
Thus if $k=p+1$ for $p$ a prime then 
\begin{equation*}
 \limsup_{n\to\infty} M_R(n,k)^{1/n} \ge k^\frac{1}{k^2-k+1} (k-1)^{\frac12 - \frac{1}{2(k^2-k+1)}}.
\end{equation*}
Consequently we cannot hope to find a general upper bound for $M_R(n,k)$ of the form $c_k^{n/2}$ with $c_k<k-1$. For example, if $k=3$ then the construction via the Fano plane gives $\limsup_{n\to\infty} M(n,3)^{1/n} \ge 24^{1/7}\approx1.5746$. One can of course extend this analysis by considering more general combinatorial designs. For example, if $n=11$ and $k=5$ there exists a combinatorial design with $\lambda=2$. In this case the incidence matrix, $A$, has $\det(A)=1215$ and thus $\limsup_{n\to\infty} M_R(n,k)^{1/n} \ge 1215^{1/11} \approx 1.9073$.

%%%%%%%%%%%%%%%%%%%%%%%%%%%%%%%%%%%%%%%%%
%%%%%%%%%%%%%%%%%%%%%%%%%%%%%%%%%%%%%%%%%
%%%%%%%%%%%%%%%%%%%%%%%%%%%%%%%%%%%%%%%%%
%%%%%%%%%%%%%%%%%%%%%%%%%%%%%%%%%%%%%%%%%
\section{Taking rows in pairs}\label{sec:tworows}

The goal of this section is to prove the following theorem.
\begin{theorem}\label{thm:tworows}
For all positive integers $m\le n$ and $k\le n$,
\begin{equation*}
 M_R(m,n,k) \le \left(\sqrt{k^2-1}\right)^{ \frac{m}2 - \frac{n}{2k}} k^{\frac{n}{2k}}.
\end{equation*}
If $m=n$ let  $c_k = \left(\sqrt{k^2-1}\right)^{\frac12\left(1 -\frac{1}{k}\right)} k^{\frac{1}{2k}}$. Then $M_R(n,k) \le c_k^n$. Note that $c_k<\sqrt{k}$.
\end{theorem}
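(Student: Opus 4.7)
The plan is to partition the rows of any $A\in R(m,n,k)$ into pairs and singletons, bound the volume of each pair via a $2\times 2$ Gram-determinant calculation, bound each singleton by Hadamard, and multiply these contributions together using the generalized Hadamard inequality~(\ref{eqn:vol}). The combinatorial input that controls the number of singletons is a maximal-matching argument.

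First I would prove a two-row lemma: if $r_1,r_2$ are two $0/1$-rows of weight $k$ whose supports intersect in $s\ge 1$ columns, then the $2\times n$ matrix $P$ with these rows has Gram matrix $PP^T=\left(\begin{smallmatrix}k&s\\s&k\end{smallmatrix}\right)$, so $\vol(P)=\sqrt{k^2-s^2}\le\sqrt{k^2-1}$. A single row of weight $k$ contributes volume $\sqrt{k}$.

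Next I would form the ``overlap graph'' $G$ on the $m$ rows of $A$, joining two rows iff they share at least one column in which both have a $1$, and fix a \emph{maximal} matching $M\subseteq E(G)$ of size $p$. By maximality the $r=m-2p$ unmatched rows form an independent set in $G$, so their $1$-supports are pairwise disjoint $k$-subsets of $[n]$, which forces $rk\le n$. Partitioning $A$ horizontally into the $p$ matched pairs and the $r$ singleton rows and iterating~(\ref{eqn:vol}) gives
\begin{equation*}
\vol(A) \;\le\; \bigl(\sqrt{k^2-1}\bigr)^{p} \bigl(\sqrt{k}\bigr)^{r} \;=\; \bigl(\sqrt{k^2-1}\bigr)^{(m-r)/2}\, k^{r/2}.
\end{equation*}

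Finally I would close with a one-line optimization: since $k>\sqrt{k^2-1}$, the right-hand side is strictly increasing in $r$, so substituting $r\le n/k$ yields the claimed bound $(\sqrt{k^2-1})^{m/2-n/(2k)}k^{n/(2k)}$, and the $m=n$ specialization together with $\sqrt{k^2-1}<k$ delivers $c_k<\sqrt{k}$. I expect no deep obstacle; the one conceptual point is insisting on a \emph{maximal} (rather than a maximum) matching, since it is precisely maximality that makes the leftover rows pairwise non-overlapping and produces the clean bound $r\le n/k$.
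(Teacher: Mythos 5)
Your proof is correct and follows essentially the same route as the paper: bound each overlapping pair of rows by $\sqrt{k^2-1}$ via the $2\times 2$ Gram determinant, apply Hadamard's inequality to the leftover rows, and observe that at most $n/k$ rows can be left over. The only cosmetic difference is that you certify the number of pairs with a maximal matching in the overlap graph (so the unmatched rows have pairwise disjoint supports), whereas the paper iterates a pigeonhole argument, removing one overlapping pair at a time while more than $n/k$ rows remain; both yield the identical count and hence the identical bound.
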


Suppose that $A\in R(m,n,k)$ and there are two rows $r_i$ and $r_j$ that overlap in $a$ ones, i.e. $\langle r_i, r_j\rangle=a$ where $\langle\cdot,\cdot\rangle$ is the dot product.  Then if we let $A_1$ be the $2\times n$ matrix formed by these rows we have 
\begin{equation*}
 A_1 A_1^T =  \left( \begin{matrix}   k & a \\   a & k  \end{matrix} \right )
\end{equation*}
and thus
\begin{equation*}
\vol(A) = \sqrt{k^2-a^2} \le \sqrt{k^2-1}.
\end{equation*}
 which improves on just using Hadamard's inequality for these rows. Hadamard's inequality tells us that $M_R(m,n,k)\le k^{m/2}$. We now use these ideas to show Theorem~\ref{thm:tworows}.

\begin{proof}[Proof of Theorem~\ref{thm:tworows}]
Any $A\in R(m,n,k)$ contains $mk$ ones. If $mk>n$ then by the pigeon hole principle there is a column with at least two ones. Thus there exist rows $r$ and $s$ such that $\langle r,s\rangle \ge 1$. Let $M_1$ be the $2\times n$ matrix consisting of rows $r$ and $s$ and $A_2$ be the matrix consisting of the remaining $m-2$ rows. Then $\vol(A_1) \le \sqrt{k^2-1}$. Note that $A_2 \in R(m-2,n,k)$ and thus by equation~\eqref{eqn:vol}, $M_R(m,n,k) \le \sqrt{k^2-1} M_R(m-2,n,k)$. Iterating this procedure $t$ times we have \begin{equation*}
M_R(m,n,k) \le \left(\sqrt{k^2-1}\right)^t M_R(m-2t,n,k)
\end{equation*}
 with the process halting once $(m-2t)k \le n$. Thus $m-2t \le n/k$. So $M_R(m-2t,n,k) \le k^{\frac{n}{2k}}$ by Hadamard's inequality. Further $\ds t\ge \frac m2-\frac{n}{2k}$ so we obtain
\begin{equation*}
M_R(m,n,k) \le \left(\sqrt{k^2-1}\right)^{\frac m2-\frac{n}{2k}} \sqrt{k}^{n/k}
\end{equation*}
as desired. Substituting $m=n$ gives the bound for $M_R(n,k)$.
\end{proof}

Theorem~\ref{thm:tworows} gives a better bound for $M(n,k)$ than Theorem~\ref{thm:ryser} provided $k$ is small. This is summarized in Theorem~\ref{thm:beat_ryser}. 

\begin{restatable}{theorem}{beatryser}\label{thm:beat_ryser}
Let $c_k$ be defined as in Theorem~\ref{thm:tworows} and $\lambda=k(k-1)/(n-1)$ as in Theorem~\ref{thm:ryser}. If $k=o(n^{1/3})$ then for $n$ sufficiently large, $c_k^n < k (k-\lambda)^{(n-1)/2}$. 
\end{restatable}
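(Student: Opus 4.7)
The plan is to take logarithms of both sides and compare the two expansions term by term. First, rewrite each side so that the dominant $k^{n/2}$ Hadamard factor is visible. A brief algebraic manipulation gives $c_k = \sqrt{k}\,(1 - 1/k^2)^{(1-1/k)/4}$, hence
\[
c_k^n = k^{n/2}\,(1 - 1/k^2)^{n(1-1/k)/4}.
\]
On the Ryser side, write $k - \lambda = k(n-k)/(n-1)$ to obtain
\[
k(k - \lambda)^{(n-1)/2} = k^{(n+1)/2}\bigl(1 - (k-1)/(n-1)\bigr)^{(n-1)/2}.
\]
Dividing both by $k^{n/2}$ reduces the desired inequality to
\[
\bigl(1 - 1/k^2\bigr)^{n(1-1/k)/4} < \sqrt{k}\,\bigl(1 - (k-1)/(n-1)\bigr)^{(n-1)/2}.
\]

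Next, take logarithms and estimate each side. On the left, the elementary bound $\ln(1-x) \le -x$ gives
\[
\tfrac{n(1-1/k)}{4}\ln(1 - 1/k^2) \;\le\; -\tfrac{n(1-1/k)}{4k^2} \;=\; -\tfrac{n}{4k^2} + O(n/k^3).
\]
On the right, since $(k-1)/(n-1) \to 0$ under the hypothesis, Taylor's theorem yields
\[
\tfrac{n-1}{2}\ln\bigl(1 - (k-1)/(n-1)\bigr) \;=\; -\tfrac{k-1}{2} + O(k^2/n).
\]
Thus it suffices to verify
\[
\tfrac{n}{4k^2} + \tfrac{\ln k}{2} \;>\; \tfrac{k-1}{2} + O(k^2/n) + O(n/k^3)
\]
for all sufficiently large $n$.

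Finally, the hypothesis $k = o(n^{1/3})$ is precisely the statement $n/k^3 \to \infty$, so the leading left-hand term $n/(4k^2)$ dominates the leading right-hand term $(k-1)/2$ by a multiplicative factor of order $n/k^3 \to \infty$. All remaining corrections---the $O(1/k)$ fractional error on the left, the $\tfrac{\ln k}{2}$ term, and the $O(k^2/n) = o(1)$ and $O(n/k^3) = o(n/k^2)$ terms on the right---are asymptotically negligible. Hence the inequality holds for $n$ sufficiently large, as claimed.

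The main obstacle, beyond organizing the Taylor bookkeeping cleanly, is the conceptual step of recognizing that the $\sqrt{k}^{\,n}$ ``Hadamard factors'' cancel exactly, so that the comparison reduces to second-order corrections; once this is seen, the savings $\Theta(n/k^2)$ coming from the subexponential factor in $c_k^n$ and the correction $\Theta(k)$ coming from Ryser's $\lambda$-adjustment can be compared head-to-head, and $n \gg k^3$ is exactly the regime where the former wins.
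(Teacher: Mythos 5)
Your proof is correct and is essentially the paper's own argument: both normalize away the common Hadamard factor $k^{n/2}$ (the paper by raising both sides to the power $2k/n$, you by dividing and taking logarithms) and reduce the claim to the comparison $n/k^2 \gg k$, i.e.\ $k=o(n^{1/3})$, via the estimate $\sqrt{k^2-1}\le k\left(1-\tfrac{1}{2k^2}\right)$ and a first-order expansion of the Ryser-side logarithm. One small imprecision: your $O(n/k^3)$ error term is not $o(n/k^2)$ when $k$ stays bounded, but since it equals exactly $n/(4k^3)\le \tfrac12\cdot n/(4k^2)$ for $k\ge 2$, the leading term $n/(4k^2)$ still dominates $(k-1)/2$ under the hypothesis and the conclusion is unaffected.
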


The proof of Theorem~\ref{thm:beat_ryser} is straightforward, but tedious. It can be found in the appendix. We just sketch the heuristics here. The growth of $c_k^n$ is, roughly, $\sqrt{k^2-1}^{n/2}$. Ryser's bound is, roughly, $(k-\lambda)^{n/2}$. Since $\sqrt{k^2-1}<k-\frac{1}{2k}$ the result is achieved provided $k-\frac{1}{2k} < k-\lambda$ and thus $\frac{1}{2k}>\lambda = k(k-1)/n$ which holds when $k=o(n^{1/3})$.

\subsection*{Example, $k=3$}

Let $k=3$ and $n=1000$. We give three bounds.

\begin{enumerate}[nosep]
\item Using Hadamard's inequality $M_R(n,k) \le k^{n/2} = 3^{500} \approx 3.64\times 10^{238}$.
\item Ryser's result has $\lambda =2/333= 0.\overline{006}$ and gives the bound $M(n,k)\le 3 (3-\lambda)^{\frac{1000-1}2}=3 (2.99399\ldots)^{499.5}\approx 2.31\times 10^{238}.$
\item  Theorem~\ref{thm:tworows} gives the bound $c_k^n$ where $c_k \approx 1.6984<\sqrt 3 $ and thus $M_R(n,k)\le c_k^n \approx 1.08\times 10^{230}.$
\end{enumerate}

%%%%%%%%%%%%%%%%%%%%%%%%%%%%%%%%%%%%%%%%%
%%%%%%%%%%%%%%%%%%%%%%%%%%%%%%%%%%%%%%%%%
%%%%%%%%%%%%%%%%%%%%%%%%%%%%%%%%%%%%%%%%%
%%%%%%%%%%%%%%%%%%%%%%%%%%%%%%%%%%%%%%%%%
\section{Taking rows in sets of size $q$}\label{sec:qrows}

In this section we generalize our approach in Section~\ref{sec:tworows} to removing from $M\in R(m,n,k)$ rows in sets of size $q$. If we have $q$ rows that each have a common one coordinate then their Gram matrix will have elements $k$ on the diagonal and elements greater than or equal to one off the diagonal. Thus we have the following definition.

\begin{definition}
Let $S_{n,a,k}$ be the $n\times n$ matrix with diagonal elements equal to $k$ and off-diagonal elements equal to $a$. If $I_n$ is the $n\times n$ identity matrix and $J_n$ is the $n\times n$ all ones matrix we can write $S_{n,a,k} = a J_n + (k-a) I_n$.
\end{definition}

Notice that the incidence matrix of an $(n,k,\lambda)$-design is $S_{n,\lambda,k}$. We will make use of the following lemma which will be proved in Section~\ref{sec:geometry_ryser}.

\begin{restatable}{lemma}{special} \label{lem:special}
We have $\det(S_{n,a,k})=(a(n-1)+k)(k-a)^{n-1}$ and $S_{n,a,k}$ is positive definite if $a<k$. Further, for any positive definite $n\times n$ matrix $A$ such that $A$ has diagonal elements $k$ and $A\ge S_{n,a,k}$ we have $\det(A) \le \det(S_{n,a,k})$.
\end{restatable}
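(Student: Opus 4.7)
The plan is to establish the three assertions in sequence: the determinant formula, positive definiteness of $S_{n,a,k}$, and then the domination inequality $\det(A) \le \det(S_{n,a,k})$. For the first two, I would exploit the decomposition $S_{n,a,k} = (k-a)I_n + aJ_n$. Since $J_n$ has eigenvalue $n$ on $\mathrm{span}(\mathbf{1})$ and $0$ on its orthogonal complement, $S_{n,a,k}$ has eigenvalues $k + a(n-1)$ (once) and $k - a$ (with multiplicity $n-1$). Multiplying these yields $\det(S_{n,a,k}) = (a(n-1)+k)(k-a)^{n-1}$, and for $0 \le a < k$ both eigenvalues are positive, giving positive definiteness.

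The heart of the lemma is the inequality. The plan is to use the fact that $\log\det$ is concave on the positive definite cone, so its graph lies below every tangent hyperplane. Taking the tangent at $S := S_{n,a,k}$ gives
\begin{equation*}
\log\det(A) \le \log\det(S) + \mathrm{tr}\bigl(S^{-1}(A - S)\bigr)
\end{equation*}
for every positive definite $A$. It then suffices to show the correction term is nonpositive.

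To compute this, I would invert $S$ using the Sherman--Morrison formula (or the ansatz $S^{-1} = \alpha I_n + \beta J_n$), obtaining
\begin{equation*}
S^{-1} = \frac{1}{k-a}I_n - \frac{a}{(k-a)\bigl(k + a(n-1)\bigr)}J_n.
\end{equation*}
Because both $A$ and $S$ have diagonal equal to $k$, we have $\mathrm{tr}(A - S) = 0$, killing the $I_n$ contribution. The $J_n$ contribution is $\mathrm{tr}(J_n(A-S)) = \sum_{i \ne j}(A_{ij} - a) \ge 0$ by the entrywise hypothesis $A \ge S_{n,a,k}$. Since the coefficient of $J_n$ in $S^{-1}$ is nonpositive (for $0 \le a < k$), the whole correction term is nonpositive, and exponentiating yields $\det(A) \le \det(S_{n,a,k})$.

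The main obstacle is more conceptual than computational: one has to recognize that the two hypotheses — entrywise domination off the diagonal and equality on the diagonal — are precisely what is needed to ensure that $A - S$ projects to a ``nonpositive'' perturbation against the gradient $S^{-1}$, since the $I_n$ and $J_n$ parts of $S^{-1}$ pair with the diagonal and the total sum of $A - S$ respectively. Once this is seen, the algebra of inverting $S_{n,a,k}$ and extracting the sign of the $J_n$ coefficient is routine, and no path/positivity argument along the segment from $S$ to $A$ is required because concavity of $\log\det$ yields the global tangent bound directly.
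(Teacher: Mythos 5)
Your proof is correct, but it takes a genuinely different route for the main inequality. The eigenvalue computation for $\det(S_{n,a,k})$ and positive definiteness is the same as the paper's. For the domination inequality, the paper proceeds in two steps: it invokes a majorization result of Olkin (Lemma~\ref{lem:correlation}), which says that replacing all off-diagonal entries of $A$ by their average $\bar{a}$ can only increase the determinant, and then checks by differentiating $f(x)=\det(S_{n,x,k})$ that $f$ is decreasing on $[0,k)$, so $\det(A)\le f(\bar a)\le f(a)$ since $\bar a\ge a$. Your argument instead uses concavity of $\log\det$ on the positive definite cone and the tangent bound $\log\det(A)\le\log\det(S)+\mathrm{tr}\bigl(S^{-1}(A-S)\bigr)$, with an explicit Sherman--Morrison inversion of $S$; the computation is right (the $I_n$ part of $S^{-1}$ pairs with $\mathrm{tr}(A-S)=0$ and the $J_n$ part carries a nonpositive coefficient against a nonnegative sum). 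Your version is self-contained given the standard concavity of $\log\det$, avoids the external majorization lemma, and makes transparent why exactly these two hypotheses (fixed diagonal, entrywise domination off the diagonal) suffice; the paper's version is arguably softer, reusing a known structural fact about correlation matrices. Both arguments share the same implicit restriction $a\ge 0$ (needed for you so that the $J_n$ coefficient of $S^{-1}$ is nonpositive, and for the paper so that $f$ is actually decreasing), which is harmless since the lemma is only ever applied with $a\ge 1$, but it would be worth stating explicitly.
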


In particular, we will make use of the special case of Lemma~\ref{lem:special} that $\det(S_{q,1,k}) =  (q+k-1)(k-1)^{q-1}$ which has maximal determinant over all $q\times q$ positive definite $q\times q$ matrices with diagonal elements $k$ and non-diagonal elements at least one. This generalizes the trivial fact, used in Section~\ref{sec:tworows}, that if $A=\left(\begin{matrix} k & a \\ a & k \end{matrix}\right)$ with $a\ge 1$ then $\det(A)\le k^2-1$.

\begin{theorem}\label{thm:qrows}
 Let $q$ be an integer with $1\le q \le k$. We have,
\begin{equation*}
M_R(m,n,k) \le \left(\sqrt{ (q+k-1)(k-1)^{q-1}}   \right)^{\frac mq - \frac nk \frac{q-1}q } k^{\frac{n (q-1)}{2k}}.
\end{equation*}
If $m=n$, let  
\begin{equation}\label{eqn:ck_qrows}
 c_{q,k} = (q+k-1)^{\frac 1{2q} \left(1-\frac{q-1}k \right)} (k-1)^{\frac 12 \frac {q-1}q \left(1-\frac{q-1}k \right)} k^{\frac{(q-1)}{2k}}.
\end{equation}
Then $M_R(n,k) \le c_{q,k}^n.$
\end{theorem}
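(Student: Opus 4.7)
The plan is to mimic the iterative peeling argument of Theorem~\ref{thm:tworows} but remove $q$ rows at a time instead of $2$. Given $A \in R(m,n,k)$ with $mk$ ones distributed among $n$ columns, the pigeonhole principle tells us that whenever $mk \ge n(q-1)+1$, some column contains at least $q$ ones, i.e., there exist $q$ rows $r_{i_1},\dots,r_{i_q}$ sharing a common $1$-coordinate. Let $A_1$ be the $q\times n$ submatrix formed by these rows. Each diagonal entry of $A_1 A_1^T$ equals $k$ (since row sums are $k$), and each off-diagonal entry $\langle r_{i_s}, r_{i_t}\rangle$ is at least $1$. Thus $A_1 A_1^T \ge S_{q,1,k}$ entrywise, and Lemma~\ref{lem:special} yields
\begin{equation*}
\vol(A_1)^2 = \det(A_1 A_1^T) \le \det(S_{q,1,k}) = (q+k-1)(k-1)^{q-1}.
\end{equation*}

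Let $A_2$ be the remaining $m-q$ rows, so $A_2 \in R(m-q,n,k)$. Applying the generalized Hadamard inequality~\eqref{eqn:vol} gives $\vol(A) \le \sqrt{(q+k-1)(k-1)^{q-1}} \cdot M_R(m-q,n,k)$. Iterating $t$ times, as long as each successive intermediate matrix has at least $n(q-1)+1$ ones so that pigeonhole still produces a column of $q$ ones, we obtain
\begin{equation*}
M_R(m,n,k) \le \Big(\sqrt{(q+k-1)(k-1)^{q-1}}\Big)^{t} \, M_R(m-qt,n,k).
\end{equation*}
The process halts precisely when $(m-qt)k \le n(q-1)$, which is achievable with $t \ge \frac{m}{q} - \frac{n(q-1)}{qk} = \frac{m}{q} - \frac{n}{k}\cdot\frac{q-1}{q}$. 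For the residual matrix of at most $n(q-1)/k$ rows, apply plain Hadamard's inequality to bound $M_R(m-qt,n,k) \le k^{n(q-1)/(2k)}$. Substituting gives exactly the claimed bound.

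For the $m=n$ statement, set $m=n$ in the exponent, factor out $n$, and rearrange: the exponent of $\sqrt{(q+k-1)(k-1)^{q-1}}$ becomes $\frac{n}{q}\bigl(1 - \tfrac{q-1}{k}\bigr)$, and splitting $(q+k-1)(k-1)^{q-1}$ into its two factors regroups cleanly into the stated $c_{q,k}$ of~\eqref{eqn:ck_qrows}.

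The one place needing care is justifying that the iteration can continue as stated. The argument needs $(m-qs)k > n(q-1)$ at each step $s < t$, so one should take $t = \lceil \tfrac{m}{q} - \tfrac{n(q-1)}{qk}\rceil$, or equivalently note that the halting condition $(m-qt)k \le n(q-1)$ forces $t \ge \tfrac{m}{q} - \tfrac{n(q-1)}{qk}$, and since larger $t$ only improves the bound (because $\sqrt{(q+k-1)(k-1)^{q-1}} \le k^{q/2}$ for $q \le k$, which follows from AM--GM on $S_{q,1,k}$ versus $kI_q$), the inequality is valid with the stated non-integer exponent. The main delicate step is invoking Lemma~\ref{lem:special} correctly --- it requires both positive definiteness of $A_1 A_1^T$ (automatic unless the $q$ rows are linearly dependent, in which case $\vol(A_1)=0$ and the bound is trivially satisfied) and the monotonicity of determinant under the partial order $A \ge S_{q,1,k}$ for matrices with fixed diagonal $k$; this monotonicity is exactly what Lemma~\ref{lem:special} provides, and deferring its proof to Section~\ref{sec:geometry_ryser} is what makes this section's argument clean.
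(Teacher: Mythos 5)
Your proposal is correct and follows essentially the same route as the paper: pigeonhole on column sums to find $q$ rows sharing a column of ones, bound their Gram determinant by $\det(S_{q,1,k})$ via Lemma~\ref{lem:special}, peel and iterate, then finish with Hadamard's inequality on the at most $n(q-1)/k$ remaining rows. Your added remarks on the halting condition, the non-integer exponent (justified since each removal factor is at most $k^{q/2}$), and the degenerate case $\vol(A_1)=0$ are correct and in fact tidy up points the paper's own proof passes over.
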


\begin{proof}
Suppose we have $A\in R(m,n,k)$. The number of ones in $A$ is $mk$. The average number of ones in a column is $mk/n$. So if $mk/n>q-1$ then there is some column containing at least $q$ ones. Let $R_q$ be an arbitrary submatrix formed by taking $q$ rows that have a column of ones. Then $R_q R_q^T \ge S_{q,1,k}$ with equality if all other column sums of $R_q$ are $0$ or $1$. Thus Lemma~\ref{lem:special} tells us that $\vol(R_q) \le\sqrt{ (q+k-1)(k-1)^{q-1}}$. We remove these rows and iterate $t$ times. So we have 
\begin{equation*}
M_R(m,n,k) \le \left(\sqrt{ (q+k-1)(k-1)^{q-1}} \right)^t M_R(m-qt,n,k)
\end{equation*}
where $t$ must satisfy $(m-qt)k/n > q-1$. Thus $m-qt>\frac nk (q-1)$ and $t<\frac mq - \frac nk \frac{q-1}q$. Thus we have 
\begin{equation*}
M_R(m,n,k) \le \left(\sqrt{ (q+k-1)(k-1)^{q-1}}   \right)^{\frac mq - \frac nk \frac{q-1}q } k^{\frac{n (q-1)}{2k}}.
\end{equation*}
If we let $m=n$, then we have
\begin{align*}
M_R(m,n,k) &\le \left( (q+k-1)(k-1)^{q-1}   \right)^{\frac n{2q} \left(1-\frac{q-1}k \right)}  k^{\frac{n (q-1)}{2k}} \\
 &= (q+k-1)^{\frac n{2q} \left(1-\frac{q-1}k \right)} (k-1)^{\frac n2 \frac {q-1}q \left(1-\frac{q-1}k \right)} k^{\frac{n (q-1)}{2k}} \\
&=c_{q,k}^n
\end{align*}
with $c_{q,k}$ as defined in equation~\eqref{eqn:ck_qrows}.
\end{proof}

Notice that $c_k$ as defined in Theorem~\ref{thm:tworows} is equivalent to $c_{2,k}$. In Theorem~\ref{thm:point44} in the appendix we show that $c_{q,k}$ is minimized when $q\approx 0.44 k$. For example, when $k=49$, we computed $c_{q,k}$ for $q=1,2,\ldots,k$. In this case $c_{1,k}=\sqrt{k}=7$. To visualize we plotted $q$ versus $\sqrt{k}-c_{q,k}$. The peak of this graph tells us the optimal choice of $q$. See figure~\ref{fig:qrows_k_49_subtracted}. In this case the optimal choice of $q$ is $q_*=\argmin_q c_{q,k}= 23$. In this case $q_*/k \approx 0.47$. We can calculate $c_{23,49}\approx 6.9931$. The plot shows that, in terms of a discrepancy from $\sqrt{k}$, using $q=23$ versus the simpler approach using $q=2$ outlined in Section~\ref{sec:tworows} gives substantial improvement.

\begin{figure}[ht]
\centering
\includegraphics[width=5.5in]{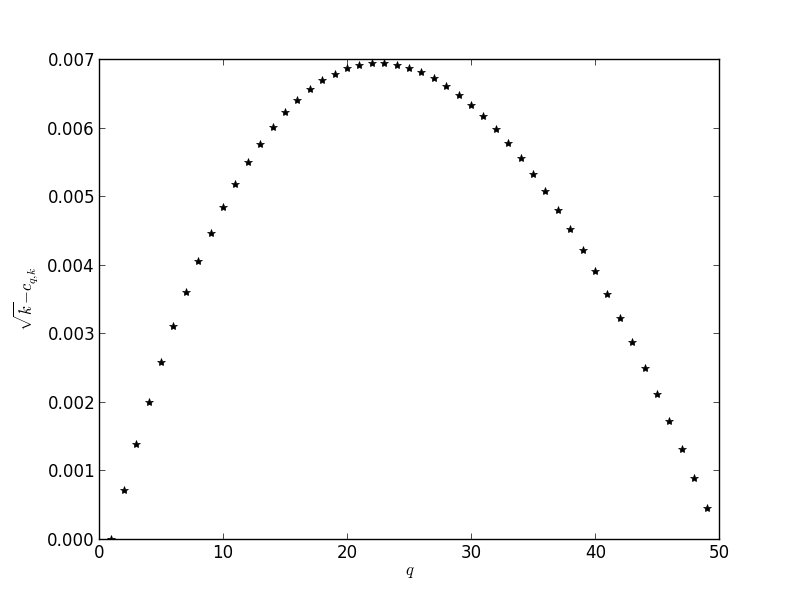}
\caption{$q$ versus $\sqrt{k}-c_{q,k}$ for $k=49$. The peak is at $(23,6.9931)$.}
\label{fig:qrows_k_49_subtracted}
\end{figure}

\subsection*{Example, $k=17$}

From Theorem~\ref{thm:qrows} we have $M_R(n,17)\le c_{q,17}^n$. We give the following progressively better bounds.
\begin{enumerate}[nosep]
 \item Hadamard's inequality is $c_{1,17}=\sqrt{17}\approx 4.1241$.
 \item Using $q=2$ rows at a time we have $c_{2,17}\approx 4.1197$.
 \item For $q\in[17]$, the minimum $c_{q,17}$ occurs when $q=8$. We have $c_{8,17}\approx 4.1111$. 
\end{enumerate}
In Section~\ref{sec:greedy}, we show that we can further improve our bound on $M_R(n,17)$.

%%%%%%%%%%%%%%%%%%%%%%%%%%%%%%%%%%%%%%%%%
%%%%%%%%%%%%%%%%%%%%%%%%%%%%%%%%%%%%%%%%%
%%%%%%%%%%%%%%%%%%%%%%%%%%%%%%%%%%%%%%%%%
%%%%%%%%%%%%%%%%%%%%%%%%%%%%%%%%%%%%%%%%%
\section{Greedily grab rows}\label{sec:greedy}

The main result of this section is Theorem~\ref{thm:greedy_det} below. As in the previous sections we show this by establishing a more general bound for $M_R(m,n,k)$. This is Theorem~\ref{thm:greedy_volume}. For constant $k$, the bound in Theorem~\ref{thm:greedy_det} is asymptotically better than that in Theorem~\ref{thm:tworows} and one can numerically check is better than Theorem~\ref{thm:qrows} for $k\le27$. See Theorem~\ref{thm:greedy_beats_tworows} in the appendix.
\begin{theorem}\label{thm:greedy_det}
 Let 
\begin{equation*}
\alpha_k = \sqrt{\frac{(2k-1)!}{(k-1)!} } (k-1)^{\frac14(k^2-k)}
\end{equation*}
 and 
\begin{equation*}
\beta_k = \left(k + \frac{k}{H_{k}} -1  \right)^{\frac12 (H_k/k)}  (k-1)^{\frac12(1-H_k/k)} 
\end{equation*}
where $H_j=\sum_{i=1}^j 1/i$ is the $j$-th harmonic number. Then $M_R(n,k) \le \alpha_k \beta_k^n.$
\end{theorem}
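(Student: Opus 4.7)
The plan is to carry out a greedy refinement of the argument in Section~\ref{sec:qrows}: instead of fixing a single group size $q$, at each step we extract the largest guaranteed group. More precisely, given the current $m \times n$ matrix $A \in R(m,n,k)$, pigeonhole on its $mk$ ones yields a column containing at least $q := \lceil mk/n \rceil$ ones; take those $q$ rows sharing that column as a submatrix $B$, split off the remaining rows, and apply the block inequality~\eqref{eqn:vol}. Since $BB^T$ has diagonal $k$ and off-diagonal entries at least $1$, we have $BB^T \ge S_{q,1,k}$ entrywise, and Lemma~\ref{lem:special} gives $\vol(B) \le \sqrt{(q+k-1)(k-1)^{q-1}}$. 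Iterating, $\vol(A) \le \prod_i \sqrt{(q_i + k - 1)(k-1)^{q_i - 1}}$ where the $q_i$ are the successive group sizes. As in earlier sections, one really proves the stronger statement for general $m \le n$ (Theorem~\ref{thm:greedy_volume}) and then specializes $m=n$.

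The main combinatorial content is counting how often each group size occurs. One checks that $q_i = q$ precisely when $m \in ((q-1)n/k,\, qn/k]$, and each such step decreases $m$ by exactly $q$, so the number of steps $n_q$ at level $q$ satisfies $n_q \le \lceil n/(kq) \rceil \le n/(kq) + 1$. Summing the resulting log-volumes gives
\begin{equation*}
\log \vol(A) \;\le\; \sum_{q=1}^{k} \left(\frac{n}{kq} + 1\right) \cdot \frac{1}{2}\log\bigl((q+k-1)(k-1)^{q-1}\bigr).
\end{equation*}
The ``$+1$'' contribution is exactly $\log \alpha_k$: using $\prod_{q=1}^k (q+k-1) = k(k+1)\cdots(2k-1) = (2k-1)!/(k-1)!$ and $\sum_{q=1}^k (q-1) = k(k-1)/2$, the product over $q$ collapses into the closed form defining $\alpha_k$.

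The ``$n/(kq)$'' contribution should match $n\log \beta_k$. The $(k-1)^{q-1}$ part contributes $\frac{n}{2k}\log(k-1) \sum_{q=1}^k (q-1)/q = \frac{n}{2}(1 - H_k/k)\log(k-1)$, which is the exponent of $(k-1)$ in $\beta_k^n$. The remaining piece $\frac{n}{2k}\sum_{q=1}^k q^{-1}\log(q+k-1)$ is handled by Jensen's inequality: with normalized weights $w_q = (1/q)/H_k$, concavity of $\log$ gives
\begin{equation*}
\sum_{q=1}^{k} \frac{\log(q+k-1)}{q} \;\le\; H_k \log\!\left(\frac{1}{H_k}\sum_{q=1}^{k} \frac{q+k-1}{q}\right) \;=\; H_k \log\!\left(k - 1 + \frac{k}{H_k}\right),
\end{equation*}
using $\sum_{q=1}^k (q+k-1)/q = k + (k-1)H_k$. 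This contributes the factor $(k + k/H_k - 1)^{nH_k/(2k)}$ in $\beta_k^n$, and combining with the $(k-1)$ piece yields $\vol(A) \le \alpha_k \beta_k^n$.

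The main obstacle is not any sharp inequality but the bookkeeping at level transitions: one needs the entering value of $m$ at each level $q$ to lie in $((q-1)n/k, qn/k]$, which follows because the previous step at level $q+1$ necessarily left $m \le qn/k$ (otherwise the ceiling would still equal $q+1$). A minor point is that the final level $q=1$ corresponds to extracting single rows from the residual matrix, and the formula $\sqrt{(1+k-1)(k-1)^0} = \sqrt{k}$ coincides exactly with the row-norm / Hadamard bound, so no separate base case is required; the summation from $q=1$ to $k$ uniformly captures both the group-extraction steps and the Hadamard finish.
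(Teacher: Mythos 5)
Your proposal is correct and follows essentially the same route as the paper: greedy extraction of $\lceil mk/n\rceil$ rows sharing a column of ones, the volume bound from Lemma~\ref{lem:special}, the count of at most $n/(kq)+1$ extractions at each level $q$, and Jensen's inequality with weights $1/q$ to produce the $\left(k+k/H_k-1\right)^{H_k}$ factor. Your bookkeeping (splitting the exponent $n/(kq)+1$ into the ``$+1$'' part yielding $\alpha_k$ and the ``$n/(kq)$'' part yielding $\beta_k^n$) is just a streamlined version of the paper's $X_r$, $Y_r$, $Z_r$ decomposition, and all the closed forms check out.
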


Suppose we have $A\in R(m,n,k)$. The number of ones in $A$ is $mk$. Thus the column averages are $mk/n$. Thus if we let $r=\lceil mk/n \rceil$ we can find $r$ rows that share a column of ones and thus by Lemma~\ref{lem:special} their volume is at most $\sqrt{\det(S_{r,1,k})} = (r+k-1)^{1/2} (k-1)^{(r-1)/2}$. Recursively, we will then use the bound 

\begin{equation*}
M_R(m,n,k) \le (r+k-1)^{1/2}(k-1)^{(r-1)/2} M_R(m-r,n,k).
\end{equation*}

We will begin by removing $r$ rows but as the number of rows in $A$ diminishes, the number of rows we can remove at each iteration will ultimately diminish to one in which case we are using Hadamard's inequality. For example, if $m=100$, $n=200$ and $k=17$ we will begin by removing $\lceil 100\cdot 17/200 \rceil = 9$ rows. We now have a matrix with $100-9=91$ rows and next we greedily remove $\lceil (100-9)\cdot 17 /200 \rceil = 8$ rows. The sequence of removals, $Q$, in this case is $$Q=(9, 8, 8, 7, 6, 6, 5, 5, 4, 4, 4, 3, 3, 3, 3, 2, 2, 2, 2, 2, 2, 1, 1, 1, 1, 1, 1, 1, 1, 1, 1).$$

Let $a_i$ be the number of times $i$ appears in $Q$. In the above example $a_9=1$ and $a_8=2$. Let $m_r=m$ and for $i<r$ let $m_i$ be the number of rows remaining just prior to removing $a_i$ sets of $i$ rows. Thus $m_0=0$. As above we have 
\begin{equation*} 
r = \left\lceil \frac{mk}n \right\rceil.
\end{equation*}
For $i=1,\ldots,r$ we have
\begin{equation*} 
m_{i-1} = m_i-i a_i
\end{equation*}
For $i\le r$ if we have $m_{i-1}$ rows we just removed $ia_i$ rows. Thus the column average is at most $i-1$. However, if we had $m_{i-1} + i$ rows then the column average must have exceeded $i-1$ as we were able to remove $i$ rows. Thus we have
\begin{equation*}
 \frac{m_{i-1} k}{n} \le i-1 < \frac{(m_{i-1}+i)k}{n}.
\end{equation*}
Rearranging, we have 
\begin{equation}\label{eqn:m_i}
 \frac{(i-1)n}k - i < m_{i-1} \le \frac{(i-1)n}k
\end{equation}
We stress that a similar bound need not hold for $m_r=m$ as this does not arise from just having removed sets of $r+1$ rows. However, we will note momentarily that the bound does hold for $m_r$ when $m=n$. For $2\le i \le r$ we have 
\begin{equation}\label{eqn:a_i}
a_{i-1}=\frac{m_{i-1}-m_{i-2}}{i-1} .
\end{equation}
Subtracting the upper bound for $m_{i-1}$ and the lower bound for $m_{i-2}$ from equation~\eqref{eqn:m_i} and substituting into equation~\eqref{eqn:a_i} gives an upper bound for $a_{i-1}$. Similarly we subtract the lower bound for $m_{i-1}$ and the upper bound for $m_{i-2}$ to get a lower bound for $a_{i-1}$. We obtain
\begin{equation}\label{eqn:a_i_bound}
  \frac{n}{k(i-1)} - \frac{i}{i-1}  < a_{i-1} < \frac{n}{k(i-1)} + 1.
\end{equation}
So we see that for $i<r$, the approximation $a_i\approx \frac{n}{ki}$ is quite good. Finally, we seek a bound for $a_r$. We have 
\begin{align*}
 a_r &= \frac{m-m_{r-1}}{r} \\
     &< \frac{m-\left(\frac{(r-1)n}{k}-r\right)}r \\
     &= \frac{1}{r} \left( m + \frac nk\right) - \frac nk +1 \\
     &\le \frac{1}{mk/n}\left( m + \frac nk\right) - \frac nk +1  \\
     &=  \frac{n^2}{k^2 m} + 1
\end{align*}
We note that if $n | mk$, for example when $n=m$ then this approximation is quite precise since $r=mk/n$. In the case $m=n$, we have $r=k$ and $a_k \le n/k^2+1$ which is consistent with equation~\eqref{eqn:a_i_bound}.

Now that we have bounded $a_i$ for $i=1,\ldots,r$ we can give an upper bound for $M_R(m,n,k)$. We have

\begin{align}
M_R(m,n,k) &\le \prod_{i=1}^r \left(\sqrt{(i+k-1)(k-1)^{i-1}}\right)^{a_i} \nonumber \\
&= \left(\prod_{i=1}^{r-1} \left((i+k-1)(k-1)^{i-1}\right)^{a_i/2}    \right) \left( (r+k-1)  (k-1)^{r-1}\right)^{a_r/2} \nonumber \\
&= \left( \prod_{i=1}^{r-1} (i+k-1)^{a_i/2} \right)  \left( (k-1)^{\frac12\sum_{i=1}^{r-1} (i-1) a_i } \right)  \left( (r+k-1)  (k-1)^{r-1}\right)^{\frac12 a_r} \nonumber  \\
&\le X_{r-1} \cdot Y_{r-1} \cdot  Z_r \label{eqn:XYZgen}
\end{align}
where 
\begin{align}
X_r &= \prod_{i=1}^{r} (i+k-1)^{\frac12\left(\frac{n}{ki}+1\right)} \label{eqn:Xdef} \\
Y_r &= (k-1)^{\frac12\sum_{i=1}^{r} (i-1) a_i } \\
Z_r &= \left((r+k-1)(k-1)^{r-1}\right)^{\frac12 \left(\frac{n^2}{k^2m}+1\right)}
\end{align}
Note that in the case $m=n$, we have $r=k$ and the estimate $a_k\le\frac{n}{k^2}+1$ agrees with the bound $a_i\le \frac{n}{ik}+1$ and thus
\begin{equation}\label{eqn:XYsquare}
 M_R(n,k) \le X_k Y_k.
\end{equation}

We begin by bounding $X_r$.
\begin{align*}
X_r  &= \prod_{i=1}^{r} (i+k-1)^{\frac12\left(\frac{n}{ki}+1\right)} \\
 &= \sqrt{\frac{(r+k-1)!}{(k-1)!}} \left(\prod_{i=1}^{r} (i+k-1)^{1/i}\right)^{\frac{n}{2k}}. \\
\end{align*}
Let $F(r,k) = \prod_{i=1}^{r} (i+k-1)^{1/i}$. Then $\log( F(r,k)) = \sum_{i=1}^{r}\frac{\log(i+k-1)}{i}$. Denote by $H_j=\sum_{i=1}^j 1/i$ the $j$-th harmonic number. Since $\log$ is a concave function we have, using Jensen's inequality,
\begin{align*}
\frac{\sum_{i=1}^{r} \frac{\log(i+k-1)}{i}}{\sum_{i=1}^{r} \frac1i} &\le \log\left(  \frac{\sum_{i=1}^{r} \frac{i+k-1}{i}}{\sum_{i=1}^{r} \frac1i}  \right) \\
&= \log \left(\frac{r + (k-1)H_{r-1}}{H_{r}}  \right) \\
&= \log \left(k + \frac{r}{H_{r}} -1  \right) 
\end{align*} 
and therefore
\begin{equation*}
\log( F(r,k)) \le \log\left(k + \frac{r}{H_{r}} -1  \right) H_{r}.
\end{equation*}
So
\begin{equation*}
F(r,k) \le \left(k + \frac{r}{H_{r}} -1  \right)^{H_{r}}.
\end{equation*}
Finally, we see that
\begin{equation*}
X_r \le \sqrt{\frac{(r+k-1)!}{(k-1)!}}  \left(k + \frac{r}{H_{r}} -1  \right)^{\frac{n H_{r}}{2k}}.
\end{equation*}

Next, we study the second factor in equation~\eqref{eqn:XYZgen}. Let $T_r=\sum_{i=1}^{r} (i-1)\left(\frac{n}{ik}+1\right)$. Then $B_r = (k-1)^{T_r/2}$. We have
\begin{align*}
T_r &= \sum_{i=1}^{r} \frac{n}{k}-1 +i - \frac{n}{ik}\\
   &= r\left(\frac nk - 1\right) + \frac{r(r+1)}{2} - \frac nk H_{r}\\
 &= (r-H_{r})\frac nk + \frac12(r^2-r).
\end{align*}
Thus,
\begin{equation*}
Y_r = (k-1)^{\frac{n}{2k}(r-H_{r})} (k-1)^{\frac14(r^2-r)}.
\end{equation*}

If we substitute our bound for $X_{r-1}$ and $Y_{r-1}$ and $Z_r$ into equation~\eqref{eqn:XYZgen} we obtain the following theorem.

\begin{theorem}\label{thm:greedy_volume}
\begin{align}
\begin{split}
M_R(m,n,k) \le & \sqrt{\frac{(r+k-2)!}{(k-1)!}}    (k-1)^{\frac14(r^2-3r+2)} ~~\times \\
&  \left(k+ \frac{r-1}{H_{r-1}} -1  \right)^{\frac{n H_{r-1}}{2k}}  (k-1)^{\frac{n}{2k}(r-H_{r-1}-1)}\left( (r+k-1)  (k-1)^{r-1}\right)^{\frac12\left(\frac{n^2}{k^2m}+1\right)} \\
\end{split}
\end{align}
where we have arranged the terms that depend on $r$ and $k$ only on the first row and the terms that depend on $n$ and $m$ on the second.
\end{theorem}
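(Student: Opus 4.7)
The plan is to collect the bounds developed in the prose preceding the theorem. Applying the greedy removal procedure to any $A \in R(m,n,k)$ produces a sequence of row blocks whose Gram matrices dominate some $S_{q,1,k}$ entrywise; Lemma~\ref{lem:special} bounds the volume of each such block by $\sqrt{(q+k-1)(k-1)^{q-1}}$, and the generalized Hadamard inequality~\eqref{eqn:vol} multiplies these over the iterations to yield
\begin{equation*}
\vol(A) \le \prod_{i=1}^{r} \bigl((i+k-1)(k-1)^{i-1}\bigr)^{a_i/2},
\end{equation*}
where $r = \lceil mk/n \rceil$ and $a_i$ counts the number of size-$i$ peels. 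Rearranging the product exactly as in~\eqref{eqn:XYZgen} yields the factorization $M_R(m,n,k) \le X_{r-1} \cdot Y_{r-1} \cdot Z_r$, where the $i = r$ case is isolated inside $Z_r$ because $a_r$ requires a different bound than the $a_i$ with $i < r$.

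The remaining task is to convert $X_{r-1}$ and $Y_{r-1}$ into the closed forms appearing in the statement. For $X_r$, I would pull out the factorial factor to write $X_r = \sqrt{(r+k-1)!/(k-1)!}\, F(r,k)^{n/(2k)}$ with $F(r,k) = \prod_{i=1}^r (i+k-1)^{1/i}$, then take logs and apply Jensen's inequality to the concave function $\log$ against the weights $\{1/i\}_{i=1}^r$ of total mass $H_r$ to obtain $F(r,k) \le \bigl(k + r/H_r - 1\bigr)^{H_r}$. For $Y_r$ the exponent $T_r = \sum_{i=1}^r (i-1)\bigl(n/(ik) + 1\bigr)$ is a routine sum that simplifies to $(r - H_r)\,n/k + (r^2 - r)/2$. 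Substituting the resulting expressions, with $r$ replaced by $r-1$, back into~\eqref{eqn:XYZgen} and keeping $Z_r$ unchanged produces the stated two-line bound once factors are grouped by whether they depend on $r,k$ only or on $n,m$ as well.

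The main subtlety to watch is the index shift: the estimate $a_i < n/(ik) + 1$ from~\eqref{eqn:a_i_bound} is valid only for $i < r$, since $m_r = m$ is not constrained by a prior greedy step in the interval of~\eqref{eqn:m_i}; for the top index $i = r$ one must use the weaker estimate $a_r < n^2/(k^2 m) + 1$, which is precisely what forces the $Z_r$ factor to have the exponent $\tfrac12(n^2/(k^2m)+1)$ rather than $\tfrac12(n/(rk)+1)$. Beyond careful bookkeeping of this boundary term, no genuinely new inequality is required: the proof is essentially a substitution of previously derived bounds into~\eqref{eqn:XYZgen}, with Jensen's inequality as the single analytic step.
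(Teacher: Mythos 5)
Your proposal is correct and follows the paper's own argument essentially verbatim: the same greedy peeling with Lemma~\ref{lem:special}, the same $X_{r-1}Y_{r-1}Z_r$ factorization isolating the top block because $a_r$ obeys only the weaker bound $a_r < n^2/(k^2m)+1$, the same Jensen step with weights $1/i$ giving $F(r,k)\le\left(k+r/H_r-1\right)^{H_r}$, and the same evaluation of $T_r$. No gaps; the index-shift subtlety you flag is exactly the point the paper handles by separating $Z_r$.
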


If we have a square matrix, $m=n$, then equation~\eqref{eqn:XYsquare} gives us
\begin{align*}
M_R(n,k) &\le X_k Y_k \\
   &\le \sqrt{\frac{(2k-1)!}{(k-1)!}}  \left(k + \frac{k}{H_{k}} -1  \right)^{\frac{n H_{k}}{2k}} (k-1)^{\frac{n}{2k}(k-H_{k})} (k-1)^{\frac14(k^2-k)} \\
&= \sqrt{\frac{(2k-1)!}{(k-1)!} } (k-1)^{\frac14(k^2-k)} \left(\left(k+\frac{k}{H_{k}} -1  \right)^{\frac{H_{k}}{2k}}  (k-1)^{\frac{1}{2k}(k-H_{k})}\right)^n
\end{align*}
establishing Theorem~\ref{thm:greedy_det} above.

\subsection*{Examples, $k=3$ and $k=17$}

For $k=3$ we have the following,
\begin{enumerate}[nosep]
\item In Section~\ref{sec:tworows} we saw $c_{2,3}=1.6984$. So $M_R(n,3)\le 1.6984^n$.
\item  Theorem~\ref{thm:greedy_det} tells us that $\alpha_3\approx 21.91$ and $\beta_3 =(40/11)^{11/36}2^{7/36} \approx 1.6977$ and $M_R(n,3)\le 21.91\times 1.6977^n$. In this case the strategy is, roughly, to use $n/9$ sets of three rows, $n/6$ sets of two rows, and apply Hadamard's inequality to the remaining $n/3$ rows. 
\end{enumerate}

\noindent For $k=17$ we have the following progressively (asyptotically) better bounds. These are visualized in figure~\ref{fig:qrows_k_17_subtracted_with_line}.
\begin{enumerate}[nosep]
\item $M_R(n,17)\le c_{2,17}^n \approx 4.1197^n$. 
\item $M_R(n,17)\le c_{8,17}^n \approx 4.1111^n$. 
\item Using Theorem~\ref{thm:greedy_det} we can compute $\alpha_{17}\approx 4.8887\times 10^{93}$ and $\beta_{17}\approx 4.1104$. Thus $M_R(n,17)\le 4.8887\times10^{93} \cdot 4.1104^n$.
\end{enumerate}

%k=17; B=[q_rows_at_a_time_base(k,q) for q=1:k]; close("all"); alphak,betak=varied_pieces_growth_const(k); plot(1:k,sqrt(k)-B,"k*",[1,k],[sqrt(k)-betak,sqrt(k)-betak],"r-"); xlabel(L"q"); ylabel(L"\sqrt{k}-c_{q,k}"); savefig("/home/ds965/thesis/qrows_k_" * string(k)*"_subtracted_with_line.png"); println(betak); println(findmin(B));

\begin{figure}[ht]
\centering
\includegraphics[width=5.5in]{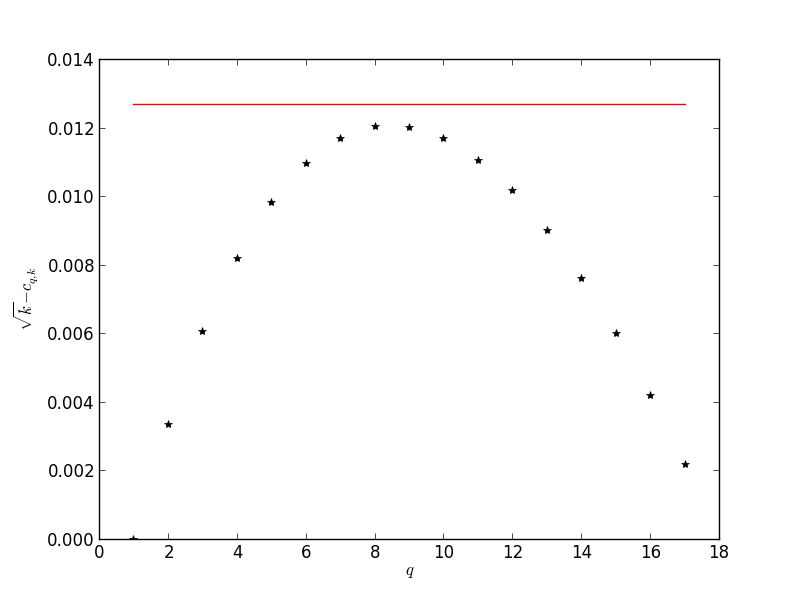}
\caption{$q$ versus $\sqrt{k}-c_{q,k}$ for $k=17$. We draw a red line at height $\sqrt{k}-\beta_k$ to show that, for $k=17$, the greedy approach gives a better bound.}
\label{fig:qrows_k_17_subtracted_with_line}
\end{figure}

We note that for general $k$ our bound for $\alpha_k$ is quite large. Due to the uncertainty of the $a_i$, the product computed in equation~\ref{eqn:Xdef}, multiplies this uncertainty $k$ times. Our goal was to minimize $\beta_k$ and as we were interested in the case where $k$ is constant. However, for any given $n$ we can compute a practical bound. For example, if $k=17$ as above and $n=1000$ then the bound $M_R(1000,17)\le c_{8,17}^{1000}\approx 9.0074\times 10^{613}$. If we were to just use the bound $M_R(1000,17)\le \alpha_{17} \beta_{17}^{1000}$ we would obtain $M_R(1000,17)\le  3.7674\times 10^{707}$ which is a worse bound. However, we can in this case exactly compute the $a_i$. These counts can be found in Table~\ref{tab:greedy_k_17_n_1000}. They give the improved bound $M_R(1000,17)\le 9.3551\times 10^{612}$.

\begin{table}[ht]
\centering
\begin{tabular}{|c|c|}
\hline
$ q  $  &  $  a_q $  \\
\hline
17  &  4  \\
16  &  4  \\
15  &  3  \\
14  &  5  \\
13  &  4  \\
12  &  5  \\
11  &  5  \\
10  &  6  \\
9  &  7  \\
8  &  7  \\
7  &  8  \\
6  &  10  \\
5  &  12  \\
4  &  14  \\
3  &  20  \\
2  &  29  \\
1  &  57  \\
\hline
\end{tabular}
\caption{Counts for greedy row removal for $k=17$ and $n=1000$.}
\label{tab:greedy_k_17_n_1000}
\end{table}

%%%%%%%%%%%%%%%%%%%%%%%%%%%%%%%%%%%%%%%%%
%%%%%%%%%%%%%%%%%%%%%%%%%%%%%%%%%%%%%%%%%
%%%%%%%%%%%%%%%%%%%%%%%%%%%%%%%%%%%%%%%%%
%%%%%%%%%%%%%%%%%%%%%%%%%%%%%%%%%%%%%%%%%
\section{A generalization of Ryser's theorem}\label{sec:geometry_ryser}

In this section we state and establish some facts about the determinants of positive definite matrices. We will use these to prove a generalization of Ryser's theorem for matrices in $R(m,n,k)$. In~\cite{olkin} the author proves the following
\begin{lemma}\label{lem:correlation}
Let $A$ be an $n\times n$, positive definite matrix with diagonal elements $a_{i,i}=1$. Let $\bar{a}=\frac{1}{n(n-1)}\sum_{i\ne j} a_{i,j}$ be the average of the off-diagonal elements. Let $\tilde{A}$ be an $n\times n$ matrix such that $\tilde{a}_{i,i}=1$ and $\tilde{a}_{i,j}=\bar{a}$ for $i\ne j$.  Then $\lambda(\tilde{A})\prec \lambda(A)$ (the eigenvalues of $\tilde{A}$ are majorized by the eigenvalues of $A$) and thus $\det(A)\le \det(\tilde{A})$.
\end{lemma}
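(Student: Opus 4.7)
The plan is to realize $\tilde{A}$ as a symmetrization of $A$ under the natural action of the symmetric group and then invoke standard majorization machinery. First, for each $\sigma\in S_n$ let $P_\sigma$ be the associated permutation matrix and note that $P_\sigma A P_\sigma^T$ has the same eigenvalues as $A$ since permutation matrices are orthogonal. I would then verify the key identity
\[
\tilde{A}=\frac{1}{n!}\sum_{\sigma\in S_n} P_\sigma A P_\sigma^T.
\]
The diagonal entries of the right-hand side are all $1$ because permutation conjugation merely relabels the diagonal, and for $i\ne j$ the $(i,j)$-entry averages $a_{\sigma^{-1}(i),\sigma^{-1}(j)}$ over all $\sigma$, which hits each of the $n(n-1)$ off-diagonal entries of $A$ equally often and thereby yields $\bar{a}$.

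Having written $\tilde{A}$ as a convex combination of matrices that share the spectrum of $A$, the next step is the following consequence of the Ky Fan maximum principle: if $B_1,\dots,B_N$ are Hermitian with common ordered spectrum $\mu_1\ge\cdots\ge\mu_n$ and $c_i\ge 0$ sum to $1$, then $\lambda(\sum_i c_i B_i)\prec(\mu_1,\dots,\mu_n)$. This is immediate from the identity $\sum_{j=1}^k \lambda_j^{\downarrow}(M)=\max_{U^*U=I_k}\operatorname{tr}(U^*MU)$ valid for any Hermitian $M$: averaging commutes with the trace but not with the max, so
\[
\sum_{j=1}^k \lambda_j^{\downarrow}\!\Bigl(\sum_i c_i B_i\Bigr)\le \sum_i c_i \max_{U^*U=I_k}\operatorname{tr}(U^*B_iU)=\sum_{j=1}^k\mu_j,
\]
with equality at $k=n$ by linearity of the trace. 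Applying this with the $n!$ permutation conjugates of $A$ in place of the $B_i$ yields $\lambda(\tilde{A})\prec\lambda(A)$.

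Finally, to pass from majorization to the determinant inequality I would use the Schur-concavity of $x\mapsto \sum_i \log x_i$ on the positive orthant, which is immediate from the concavity of $\log$ and symmetry. Because $A$ is positive definite, the convex combination $\tilde{A}$ is also positive definite, so both spectra lie in $(0,\infty)^n$; hence $\lambda(\tilde{A})\prec \lambda(A)$ gives $\sum_i\log\tilde{\lambda}_i\ge \sum_i \log\lambda_i$, i.e.\ $\det(\tilde{A})\ge \det(A)$.

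The only nontrivial input is the Ky Fan step, and this is where I expect the main obstacle to lie if one wanted to avoid citing classical majorization theory outright; the symmetrization step is a direct computation and the Schur-concavity step is a one-line application of Jensen's inequality once the majorization is in hand.
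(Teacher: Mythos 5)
Your proof is correct. Note that the paper does not actually prove Lemma~\ref{lem:correlation}; it simply states it and cites Olkin, so you have supplied a complete argument for a result the paper treats as a black box. Each of your three steps checks out: the identity $\tilde{A}=\frac{1}{n!}\sum_{\sigma}P_\sigma A P_\sigma^T$ holds because each ordered off-diagonal pair $(p,q)$ arises as $(\sigma^{-1}(i),\sigma^{-1}(j))$ for exactly $(n-2)!$ permutations $\sigma$; the Ky Fan maximum principle does give $\lambda(\sum_i c_i B_i)\prec\mu$ for isospectral Hermitian summands, with equality of the full sums coming from linearity of the trace; and since each conjugate $P_\sigma A P_\sigma^T$ is positive definite, so is $\tilde{A}$, which legitimizes applying the Hardy--Littlewood--P\'olya inequality to the concave function $\log$ on $(0,\infty)$ to get $\det(\tilde{A})\ge\det(A)$. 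This symmetrization-plus-majorization argument is the classical route (essentially the one in the cited source and in Marshall--Olkin), so while it is not a \emph{different} method in any deep sense, it has the virtue of making the paper self-contained at the cost of invoking the Ky Fan principle, which you correctly identify as the only nontrivial external input.
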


Notice, that via rescaling the requirement $a_{i,i}=1$ can be replaced by any constant on the diagonal. Recall that $S_{n,a,k}$ is the $n\times n$ matrix with diagonal elements $k$ and off-diagonal elements $a$. We now restate and prove Lemma~\ref{lem:special}.

\special*

\begin{proof}
To see $\det(S_{n,a,k})=(a(n-1)+k)(k-a)^{n-1}$ we find the eigenvalues. If $u$ is the all ones vector, then $S_{n,a,k} u = (an+k-a)u$ thus $S_{n,a,k}$ has the  eigenvalue $an+k-a = a(n-1)+k$. Further if $v$ is in the codimension one subspace of vectors whose coordinates sum to zero then $S_{n,a,k}v = (k-a) v$ and thus $S_{n,a,k}$ has the eigenvalue $(k-a)$ with multiplicity $n-1$. Thus $\det(S_{n,a,k})=(a(n-1)+k)(k-a)^{n-1}$. If $a<k$ all eigenvalues are positive. 

Next, fix $n,k$ and let $f(x) = \det(S_{n,x,k})= (x(n-1)+k)(k-x)^{n-1}$. Then
\begin{align*}
\frac{d}{dx} f(x) &= (n-1)(k-x)^{n-1} - (x(n-1)+k) (n-1) (k-x)^{n-2} \\
&=(n-1)(k-x)^{n-2} \left[(k-x) - (x(n-1) + k)\right] \\
&= (n-1)(k-x)^{n-2} (-xn) \\
&<0
\end{align*}
for all $x<k$. Thus $f(x)$ is a decreasing function for $x<k$. If $\bar{a}$ is the average of the off-diagonal elements of $A$ then we have $\tilde{A} = S_{n,\bar{a},k}$ and $a\le \bar{a}$. From Lemma~\ref{lem:correlation} we have $\det(A) \le \det(\tilde{A})$.  Since $\det(S_{n,x,k})$ is decreasing we have $\det(\tilde{A}) \le \det(S_{n,a,k})$. Combining these two inequalities gives the result.
\end{proof}

We use the above lemmas to prove the following generalization of Ryser's theorem (Theorem~\ref{thm:ryser}).

\begin{theorem}\label{thm:ryser_gen}
Let $A\in R(m,n,k)$. Let $\mu = \frac{k}{m-1}\left(\frac{mk}n-1\right)$. Then 
\begin{equation}\label{eqn:ryser_gen}
\vol(A) \le  k\sqrt{\frac{m}{n}} (k-\mu)^{\frac{m-1}{2}}.
\end{equation}
\end{theorem}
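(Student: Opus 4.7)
The plan is to apply Lemma~\ref{lem:special} directly to the Gram matrix $AA^T$, so the work reduces to obtaining a good lower bound on the average of its off-diagonal entries.

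First I would set up the Gram matrix. Since each row of $A$ has exactly $k$ ones, $AA^T$ is an $m\times m$ positive semi-definite matrix with diagonal entries all equal to $k$ and off-diagonal entries $\langle r_i,r_j\rangle \ge 0$. If $AA^T$ is singular then $\vol(A)=0$ and the inequality is trivial, so assume it is positive definite. Let $\bar{a}$ denote the average of its off-diagonal entries. The key identity is
\begin{equation*}
\sum_{i,j}\langle r_i,r_j\rangle = \left\|\sum_i r_i\right\|^2 = \sum_{\ell=1}^n c_\ell^2,
\end{equation*}
where $c_\ell$ is the $\ell$-th column sum of $A$. Since $\sum_\ell c_\ell = mk$, Cauchy--Schwarz (equivalently, convexity of $x\mapsto x^2$) gives $\sum_\ell c_\ell^2 \ge (mk)^2/n$. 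Subtracting the diagonal contribution $mk$ and dividing by $m(m-1)$ produces
\begin{equation*}
\bar{a} \;\ge\; \frac{m^2k^2/n - mk}{m(m-1)} \;=\; \frac{k}{m-1}\left(\frac{mk}{n}-1\right) \;=\; \mu.
\end{equation*}

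Next, I would invoke Lemma~\ref{lem:special}. The hypotheses are satisfied because $AA^T$ is positive definite with diagonal entries $k$ and satisfies $AA^T \ge S_{m,1,k}\ge 0$ in the entrywise sense needed by the correlation-inequality step baked into the lemma's proof; more precisely, letting $\tilde{A}=S_{m,\bar{a},k}$, Lemma~\ref{lem:correlation} gives $\det(AA^T)\le \det(S_{m,\bar{a},k})$, and the monotonicity computation $\frac{d}{dx}\det(S_{m,x,k})<0$ for $x<k$ (established inside the proof of Lemma~\ref{lem:special}) then gives $\det(S_{m,\bar{a},k})\le \det(S_{m,\mu,k})$ because $\bar{a}\ge \mu$. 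Combining,
\begin{equation*}
\det(AA^T) \;\le\; \bigl(\mu(m-1)+k\bigr)(k-\mu)^{m-1}.
\end{equation*}

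Finally I would simplify the leading factor: by the definition of $\mu$,
\begin{equation*}
\mu(m-1)+k \;=\; k\!\left(\frac{mk}{n}-1\right)+k \;=\; \frac{mk^2}{n},
\end{equation*}
so taking square roots yields $\vol(A)\le k\sqrt{m/n}\,(k-\mu)^{(m-1)/2}$, which is the claimed bound. The only place any thought is required is the lower bound on $\bar{a}$; once that is in hand, Lemma~\ref{lem:special} does the rest, and the arithmetic collapses cleanly because $\mu$ was defined precisely so that $\mu(m-1)+k$ equals $mk^2/n$. Specializing to $m=n$ recovers Ryser's theorem, which is a useful sanity check.
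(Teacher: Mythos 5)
Your proof is correct and follows essentially the same route as the paper: lower-bound the average off-diagonal entry of $AA^T$ by $\mu$ using convexity of the column sums, then apply Lemma~\ref{lem:correlation} together with the monotonicity of $x\mapsto\det(S_{m,x,k})$ to pass from $S_{m,\bar{a},k}$ to $S_{m,\mu,k}$. The parenthetical claim that $AA^T\ge S_{m,1,k}$ entrywise is neither needed nor generally true (two rows may be disjoint), but your ``more precisely'' clause supplies the correct argument, which is exactly the paper's.
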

Notice that if $m=n$ then $\mu = k(k-1)/n = \lambda$ and we recover Theorem~\ref{thm:ryser}.

\begin{proof}
Let $A\in R(m,n,k)$ and consider the Gram matrix, $AA^T$. We have $\vol(A)=\sqrt{\det(AA^T)}$. The diagonal elements of $AA^T$ are all $k$. Let $b_j$ be the number of ones in column $j$ of $A$. We have
\begin{equation*}
\sum_{j=1}^n b_j = mk
\end{equation*}
 If there are $b_j$ ones in column $j$ then the number of ordered pairs of distinct rows $(r,s)$ that overlap in these ones is $2\binom{b_j}2$. So we have
\begin{align*}
\sum_{\substack{r,s\in \text{rows}(A)\\r\ne s}} \langle r,s\rangle &= \sum_{j=1}^n 2\binom{b_j}{2} \\
  &= \sum_{j=1}^n b_j^2 - \sum_{j=1}^n b_j \\
 &=  \sum_{j=1}^n b_j^2 - mk
\end{align*}
The sum of the squares of the $b_j$ is minimized when they are all equal. So we get the lower bound
\begin{equation*}
  \sum_{\substack{r,s\in \text{rows}(A)\\r\ne s}} \langle r,s\rangle \ge n \left(\frac{mk}n\right)^2 - mk =  mk \left(\frac{mk}n-1\right) 
\end{equation*}

The average off-diagonal entry of $AA^T$ can then be bounded.
\begin{equation*}
\frac{1}{m(m-1)} \sum_{\substack{r,s\in \text{rows}(A)\\r\ne s}} \langle r,s\rangle  \ge  \frac{1}{m(m-1)} \left( mk \left(\frac{mk}n-1\right) \right) = \frac{k}{m-1}\left(\frac{mk}n-1\right) = \mu.
\end{equation*}
 Notice that if $m=n$ then $\mu = k(k-1)/(n-1)$ and thus $\mu=\lambda$ as in Theorem~\ref{thm:ryser}. Also, notice that this only gives useful information if $\mu>0$ and thus $m>n/k$. This is not surprising as otherwise $mk<n$ and then we can arrange the rows orthogonally. Thus, Lemma~\ref{lem:special} gives us
\begin{align*}
\det(A) &\le \det(S_{m,\mu,k}) \\
&= (\mu (m-1)+k)(k-\mu)^{m-1} \\
&=k^2 \frac{m}{n} (k-\mu)^{m-1}
\end{align*}
Taking the square root gives equation~\eqref{eqn:ryser_gen}. 
\end{proof}

%%%%%%%%%%%%%%%%%%%%%%%%%%%%%%%%%%%%%%%%%
%%%%%%%%%%%%%%%%%%%%%%%%%%%%%%%%%%%%%%%%%
%%%%%%%%%%%%%%%%%%%%%%%%%%%%%%%%%%%%%%%%%
%%%%%%%%%%%%%%%%%%%%%%%%%%%%%%%%%%%%%%%%%
\subsection{Counterexample to a conjecture of Li, Lin and Rodman}

 Conjecture 4.8 of~\cite{linesums} states that if $\lambda = k(k-1)/(n-1)$ and $A\in S(n,k)$ is non-singular and the off-diagonal entries, $x$, of $AA^T$ and $A^TA$ satisfy $|x-\lambda|<1$ then $|\det(A)|=M(n,k)$. We give the following counterexample. Let $n=10$ and $k=3$. In this case $\lambda = 3\cdot 2 / 9 = 2/3$. First observe that $M(10,3)\ge 48$ since if 
\begin{equation*}
B = \left( \begin{matrix}
0  &  1  &  0  &  0  &  0  &  1  &  0  &  0  &  1  &  0   \\
0  &  0  &  0  &  0  &  0  &  1  &  1  &  1  &  0  &  0   \\
1  &  0  &  0  &  0  &  0  &  0  &  1  &  0  &  1  &  0   \\
0  &  1  &  0  &  0  &  1  &  0  &  0  &  1  &  0  &  0   \\
0  &  0  &  0  &  1  &  1  &  1  &  0  &  0  &  0  &  0   \\
0  &  0  &  1  &  0  &  1  &  0  &  0  &  0  &  0  &  1   \\
1  &  0  &  1  &  0  &  0  &  0  &  0  &  1  &  0  &  0   \\
1  &  1  &  0  &  0  &  0  &  0  &  0  &  0  &  0  &  1   \\
0  &  0  &  0  &  1  &  0  &  0  &  0  &  0  &  1  &  1   \\
0  &  0  &  1  &  1  &  0  &  0  &  1  &  0  &  0  &  0   \\
\end{matrix} \right)
\end{equation*}
%[0 1 0 0 0 1 0 0 1 0; 0 0 0 0 0 1 1 1 0 0; 1 0 0 0 0 0 1 0 1 0; 0 1 0 0 1 0 0 1 0 0; 0 0 0 1 1 1 0 0 0 0; 0 0 1 0 1 0 0 0 0 1; 1 0 1 0 0 0 0 1 0 0; 1 1 0 0 0 0 0 0 0 1; 0 0 0 1 0 0 0 0 1 1; 0 0 1 1 0 0 1 0 0 0]
then $B\in S(10,3)$ and $\det(B)=48$. Next, let
\begin{equation*}
A = \left( \begin{matrix}
0  &  1  &  0  &  0  &  0  &  0  &  0  &  1  &  1  &  0   \\
0  &  0  &  0  &  0  &  1  &  1  &  1  &  0  &  0  &  0   \\
1  &  0  &  0  &  1  &  0  &  0  &  0  &  0  &  1  &  0   \\
0  &  0  &  1  &  0  &  0  &  0  &  0  &  1  &  0  &  1   \\
0  &  1  &  0  &  1  &  0  &  1  &  0  &  0  &  0  &  0   \\
0  &  0  &  0  &  0  &  0  &  1  &  0  &  0  &  1  &  1   \\
1  &  0  &  0  &  0  &  1  &  0  &  0  &  0  &  0  &  1   \\
0  &  0  &  0  &  1  &  0  &  0  &  1  &  1  &  0  &  0   \\
1  &  0  &  1  &  0  &  0  &  0  &  1  &  0  &  0  &  0   \\
0  &  1  &  1  &  0  &  1  &  0  &  0  &  0  &  0  &  0   \\
\end{matrix} \right)
\end{equation*}
%[0 1 0 0 0 0 0 1 1 0; 0 0 0 0 1 1 1 0 0 0; 1 0 0 1 0 0 0 0 1 0; 0 0 1 0 0 0 0 1 0 1; 0 1 0 1 0 1 0 0 0 0; 0 0 0 0 0 1 0 0 1 1; 1 0 0 0 1 0 0 0 0 1; 0 0 0 1 0 0 1 1 0 0; 1 0 1 0 0 0 1 0 0 0; 0 1 1 0 1 0 0 0 0 0]
then we see $A\in S(10,3)$ and $\det(A)=15<M(10,3)$. Further, we can check that the off-diagonal entries of $A A^T$ and $A^TA$ are exclusively $0$ and $1$ which of course satisfy $|x-2/3|<1$.

%%%%%%%%%%%%%%%%%%%%%%%%%%%%%%%%%%%%%%%%%
%%%%%%%%%%%%%%%%%%%%%%%%%%%%%%%%%%%%%%%%%
%%%%%%%%%%%%%%%%%%%%%%%%%%%%%%%%%%%%%%%%%
%%%%%%%%%%%%%%%%%%%%%%%%%%%%%%%%%%%%%%%%%
\section{Matrices with $kn$ ones}\label{sec:knones}

In~\cite{bruhn}, they show that $M_T(n,2) \le 2^{n/6}3^{n/6}\approx 1.348^n$ giving an exponential improvement over Ryser's theorem. They ask if one can show small bounds for matrices with $3n$ ones. We do this and in fact show that the bound in Theorem~\ref{thm:tworows} holds for matrices in $T(n,k)$ where $k$ is integral.

\begin{theorem}\label{thm:knones}
Let $k\ge 2$ be an integer. Let $c_k =  \left(\sqrt{k^2-1}\right)^{\frac12\left(1 -\frac{1}{k}\right)} k^{\frac{1}{2k}}$ as in Theorem~\ref{thm:tworows}. Then $M_T(n,k) \le c_k^n$
\end{theorem}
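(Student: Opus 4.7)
The plan is to adapt the pair-extraction argument of Theorem~\ref{thm:tworows} to handle varying row sums. Given $A \in T(n,k)$ with row sums $r_1, \ldots, r_n$ summing to $nk$ (assume $r_i \ge 1$ for all $i$, else $\det A = 0$), I would iteratively extract pairs of rows $(v_i, v_j)$ that share a common column. By pigeonhole such a pair exists whenever the current submatrix has more than $n$ ones, and its volume is bounded by $\sqrt{r_i r_j - \langle v_i, v_j\rangle^2} \le \sqrt{r_i r_j - 1}$. Upon termination, Hadamard is applied to the remaining rows. Combining via Fischer's inequality~\eqref{eqn:vol} then yields
\[
|\det(A)| \le \Biggl(\prod_{i=1}^n \sqrt{r_i}\Biggr) \cdot \prod_{\text{pairs}} \sqrt{1 - \frac{1}{r_i r_j}}.
\]

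For the first factor, AM--GM applied to positive integers summing to $nk$ gives $\prod_i r_i \le k^n$, with equality iff all $r_i = k$; this is where the hypothesis that $k$ is an integer enters crucially. Thus it remains to show the pair-product satisfies
\[
\prod_{\text{pairs}} \left(1 - \frac{1}{r_i r_j}\right) \le \left(1 - \frac{1}{k^2}\right)^{n(k-1)/(2k)},
\]
which would recover the $c_k^n$ bound of Theorem~\ref{thm:tworows}. In the $R(n,k)$ case, where $r_i = k$ uniformly, this holds with equality since each pair contributes exactly $1 - 1/k^2$ and the count of extracted pairs is exactly $n(k-1)/(2k)$.

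The main obstacle is that in $T(n,k)$ a pair with $r_i r_j > k^2$ contributes a factor larger than $1 - 1/k^2$, which must be compensated by other pairs with $r_i r_j < k^2$. My approach is to choose the extraction order greedily, preferentially pairing a heavy row ($r_i > k$) with a deficit row ($r_j < k$) so that $r_i r_j \le k^2$; the balance $\sum_i (r_i-k)^+ = \sum_i (k-r_i)^+$, forced by $\sum r_i = nk$, guarantees enough deficit mass to absorb the heavy excess. The delicate part is bounding the contribution from any residual pairs where such balanced pairing fails, which I would address via a convexity analysis of $\log(1 - 1/x)$ together with the count of extracted pairs, showing that the extremal configuration for the pair-product is precisely the uniform case $A \in R(n,k)$.
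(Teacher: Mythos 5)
Your plan diverges from the paper's and, as sketched, has a gap that I do not think can be closed along the lines you propose. The central problem is step where you ``choose the extraction order greedily, preferentially pairing a heavy row with a deficit row.'' The pigeonhole argument only guarantees that \emph{some} pair of remaining rows shares a column; it does not let you decide \emph{which} pair. Whether a heavy row and a deficit row overlap is dictated by the combinatorial structure of $A$ (e.g.\ all heavy rows could be supported on one block of columns and all light rows on a disjoint block), so the balance $\sum_i (r_i-k)^+ = \sum_i(k-r_i)^+$ gives you no control over which products $r_ir_j$ actually occur among extractable pairs. A second, related gap: the exponent $n(k-1)/(2k)$ in your target pair-product inequality is the number of pairs extracted in the uniform case; with non-uniform row sums the extraction can terminate earlier (each removed pair can carry away up to $2\max_i r_i$ ones rather than $2k$), so you cannot assume that many pairs. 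Finally, your two bounds are not separable: when the $r_i$ are far from uniform the pair-product inequality simply fails (few pairs, each with factor near $1$), and the theorem is rescued only because AM--GM is then far from tight; making that trade-off rigorous is essentially the whole difficulty, and the ``convexity analysis'' is where the proof would actually live.

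The paper takes a cleaner route that sidesteps all of this. It applies Hadamard to each row with $a_i\ne k$ \emph{individually} (contributing $\prod_{a_i\ne k}\sqrt{a_i}$, no overlap structure needed), leaving a matrix in $R(n-r,n,k)$ to which Theorem~\ref{thm:tworows} applies. A discrete smoothing argument on the multiset of row sums (replacements that only increase the product while preserving the total $nk$ --- this is where integrality of $k$ enters) reduces to the worst case where the non-$k$ row sums are $s$ copies of $k-1$ and $s$ copies of $k+1$. The punchline is that $\sqrt{(k-1)(k+1)}=\sqrt{k^2-1}$ is exactly the per-pair factor in Theorem~\ref{thm:tworows}, so each $(k-1,k+1)$ couple costs the same as one extracted overlapping pair and the exponents telescope to $c_k^n$. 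If you want to salvage your approach, I would suggest importing this idea: do not try to pair off-weight rows by overlap at all; peel them off as singletons and let the identity $\sqrt{(k-1)(k+1)}=\sqrt{k^2-1}$ do the accounting.
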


\begin{proof}
Let $A\in T(n,k)$. We assume $A$ is non-singular and so the row sums of $A$ are positive integers. Let $r$ be the number of rows not summing to $k$. Let $a_i$ be the sum of the $i$-th row of $A$. If we apply Hadamard's inequality to the rows not summing to $k$ we have
\begin{equation}\label{eqn:knones}
\det(A) \le \left(\prod_{a_i \ne k} \sqrt{a_i} \right) M_R(n-r,n,k).
\end{equation}
We want to show that we can reduce to the case $a_i \in \{k-1,k,k+1\}.$ To begin, suppose that there exist $\epsilon_1,\epsilon_2>1$ such that for some $i,j$, $a_i=k-\epsilon_1$ and $a_j=k+\epsilon_2$. Then if we  replace $a_i$ and $a_j$ with $a_i+1$ and $a_j+1$  then the product in Equation~\ref{eqn:knones} only increases. Iterating this procedure we can assume that for all $i$, $a_i \in \{k-2,k-1,k,k+1,k+2\}$ with at most one of $k-2$ and $k+2$ appearing. Next suppose there is some $a_i=k-2$. Then we do not have $a_j=k+2$ for any $j$ so there must exist $j,\ell$ such that $a_j=a_\ell=k+1$. If we replace $(a_i,a_j,a_k)$ with $(k-1, k-1, k+2)$ then we have increased the product by $(k-1)^2(k+2)-(k-2)(k+1)^2=4$. Iterating this procedure we can assume that $k-2$ does not appear among the $a_i$. So at this point the possible $a_i$ values are $k-1, k, k+1$ and $k+2$. Finally, suppose $k+2$ appears at least twice. Then $k-1$ must appear at least four times, otherwise the average exceeds $k$. So we do the replacement
\begin{equation*}
(k-1,k-1,k-1,k-1,k+2,k+2) \to (k-1,k-1,k-1,k+1,k+1,k+1)
\end{equation*}
which preserves the sum of $6k$ and we see that $(k-1)^2(k+1)^3 - (k-1)^4(k+2)^2 = (3k+5)(k-1)^3>0$. Iterating this procedure we can assume that $k+2$ appears at most once. Let $s=|\{i~:~a_i=k-1\}|$ be the number of times $k-1$ appears. Then the number of $a_i$ greater than $k$ must be one of the following quantities:
\begin{enumerate}[nosep]
\item There are exactly $s$ of the $a_i$ equal to $k+1$.
\item There are exactly $(s-2)$ of the $a_i$ equaling $k+1$ and exactly one equaling $k+2$.
\end{enumerate}
We have the first case if $r$ is even and the second if $r$ is odd. In the first case we have $\prod_{a_i \ne k} a_i = (k-1)^s(k+1)^s$ and in the second case the product is $(k-1)^s (k+1)^{s-2}(k+2)$. The ratio of the first quantity to the second is $(k+1)^2/(k+2)>1$ for all $k\ge 1$. Thus we can conservatively assume we are in the first case. Using Theorem~\ref{thm:tworows} to bound $M_R(n-t,n,k)$ we have
\begin{align*}
\det(A) 	&\le \left(\prod_{a_i \ne k} \sqrt{a_i} \right) M_R(n-t,n,k) \\
		&\le  (k-1)^{s/2}(k+1)^{s/2} M_R(n-2s,n,k) \\
		&\le \sqrt{k^2-1}^{s}  \sqrt{k^2-1}^{\frac{n-2s}{2}-\frac{n}{2k}} k^{\frac{n}{2k}} \\
		&= \sqrt{k^2-1}^{n(1-1/k)}  k^{\frac{n}{2k}} \\
		&= (c_k)^n
\end{align*}
as desired.
\end{proof}

Recalling that $c_{3}=24^{1/6}\approx 1.6984$ we have $M_T(n,3) \le 1.6984^n$. Recall from Section~\ref{sec:k=2} that a construction based on the Fano plane gives the lower bound $M_T(n,3)\ge (24^{1/7})^n\approx 1.5746^n$ for infinitely many $n$. So $\limsup_{n\to\infty} M_T(n,3)^{1/n} \in [24^{1/7},24^{1/6}]$. The authors of~\cite{bruhn} conjecture that $24^{1/7}$ is the true value. We echo this sentiment. At the very least we do not believe our upper bound is tight.

In our proof of Theorem~\ref{thm:knones} we argued that a matrix in $T(n,k)$ that has many rows not summing to $k$ must have determinant smaller than $c_k^n$. If we consider $T(n,\tilde{k})$ for non-integer $\tilde{k}\in(k,k+1)$ it seems reasonable to expect that if the rows of a matrix in $T(n,\tilde{k})$ are not mostly of weight $k$ and $k+1$ in the appropriate ratio then the determinant will be small. As such we have the following conjecture.

\begin{conjecture}\label{conj:ktildeones}
Let $\tilde{k}> 1$ be a real number. Let $k= \lfloor \tilde{k} \rfloor$. Let $\gamma = \tilde{k}- k$. Let $m_1 = (1-\gamma)n$ and $m_2=\gamma n$. Then
\begin{equation*}
M_T(n,\tilde{k}) \le M_R(m_k,n,k)^{1-\gamma}  M_R(m_{k+1},n,k+1)^{\gamma}.
\end{equation*}
\end{conjecture}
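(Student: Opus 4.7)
The plan is to try to lift the proof of Theorem~\ref{thm:knones} from integral $k$ to real $\tilde k$, proceeding in two stages: a row-sum reduction to the canonical profile $\{k,k+1\}$ with multiplicities $(m_1,m_2)$, followed by a Fischer-type horizontal decomposition whose two blocks are handled by $M_R$. The residual difficulty is reconciling the direct product bound with the exponents $1-\gamma$ and $\gamma$ stated in the conjecture.

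For any non-singular $A\in T(n,\tilde k)$ with integer row sums $a_1,\ldots,a_n$ summing to $\tilde k n$, the first step is to argue as in the proof of Theorem~\ref{thm:knones} that we may assume every row sum lies in $\{k,k+1\}$. If some pair $(a_i,a_j)$ satisfies $a_i\le k-1$ and $a_j\ge k+2$, replacing it by $(a_i+1,a_j-1)$ preserves the total and only increases the row-wise Hadamard contribution $\prod\sqrt{a_i}$ appearing in the analogue of the key inequality in the proof of Theorem~\ref{thm:knones}; a further local rebalancing step of the form $(k-1,k-1,k+2)\to(k-1,k,k+1)$ eliminates the outlying weights $k-2$ and $k+2$ in parallel with the integer argument. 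Once all $a_i\in\{k,k+1\}$, the sum constraint $\sum a_i=\tilde k n$ forces exactly $m_2=\gamma n$ rows of weight $k+1$ and $m_1=(1-\gamma)n$ rows of weight $k$.

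In the second stage, let $A_1$ and $A_2$ denote the horizontal blocks of weight-$k$ and weight-$(k+1)$ rows respectively. Fischer's inequality, equation~\eqref{eqn:vol}, immediately gives
\begin{equation*}
\det(A)\le \vol(A_1)\vol(A_2) \le M_R(m_1,n,k)\,M_R(m_2,n,k+1).
\end{equation*}
This is the naive bound, with exponents $(1,1)$, and it is what the machinery developed in the earlier sections yields without extra input. The conjecture asks for the strictly stronger geometric-mean form with exponents $(1-\gamma,\gamma)$, and extracting this requires a genuinely new ingredient.

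The hard part, and the main obstacle, is bridging this exponent gap. Three plausible routes, in order of how much new structure they require, are: (i) establishing a joint log-concavity of $M_R(m,n,k)$ in its arguments so that the un-exponentiated product can be rewritten as a weighted geometric mean in a controlled way; (ii) showing that an extremal $A$ must have its weight-$k$ and weight-$(k+1)$ rows supported on disjoint column blocks of widths close to $m_1$ and $m_2$ (e.g.\ via a block-diagonal saturation argument), which effectively replaces each $M_R(m_i,n,\cdot)$ by $M_R(m_i,m_i,\cdot)$ and rescales the exponents appropriately; or (iii) using convexity of $\log M_T(n,\cdot)$ in $\tilde k$ to interpolate between integer values, though one must then confront the fact that at integer $\tilde k$ the bound $M_T(n,k)\le M_R(n,n,k)$ is false (as Theorem~\ref{thm:knones} and the $k=2$ discussion in Section~\ref{sec:k=2} show), so the boundary behaviour needs separate care. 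None of the tools developed earlier in the paper suffice directly, and the conjectured geometric-mean sharpening appears to demand a new extremal-combinatorial structure theorem for $T(n,\tilde k)$. A useful preliminary sanity check, before committing to any of these routes, is to verify the conjecture numerically at $\tilde k=3/2$ for small $n$ and to look for explicit constructions that saturate the geometric-mean side.
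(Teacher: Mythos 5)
The statement you are addressing is Conjecture~\ref{conj:ktildeones}; the paper offers no proof of it, so there is nothing to compare your argument against, and your attempt does not close the conjecture either. Your own diagnosis of where it fails is accurate. Stage one (pushing all row sums into $\{k,k+1\}$ by local rebalancing, in parallel with the proof of Theorem~\ref{thm:knones}) is plausible, though note that in that proof the rebalancing acts on the upper bound $\bigl(\prod\sqrt{a_i}\bigr)M_R(n-r,n,k)$ rather than on the matrix itself, and the $M_R$ factor changes whenever a row sum crosses into the target set; with two target weights, both charged to $M_R$ rather than to Hadamard, you would still need to verify that the worst-case profile is exactly $(m_1,m_2)=((1-\gamma)n,\gamma n)$. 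Stage two, via equation~\eqref{eqn:vol}, then gives
\begin{equation*}
\det(A)\le \vol(A_1)\vol(A_2)\le M_R(m_1,n,k)\,M_R(m_2,n,k+1),
\end{equation*}
that is, exponents $(1,1)$.

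The genuine gap is exactly the one you name: the conjectured exponents $(1-\gamma,\gamma)$ sum to $1$, and since each $M_R(m_i,n,\cdot)$ grows exponentially in $m_i$, the weighted geometric mean is exponentially smaller than the plain product, so the conjecture is strictly stronger than what Fischer's inequality delivers. None of your three proposed routes is carried out, and none follows from the tools in the paper; route (iii) in particular founders on the boundary behaviour for the reason you give. What you do have is, at best, the weaker inequality with exponents $(1,1)$ --- which, combined with Theorem~\ref{thm:tworows} and the concavity estimate $k^{1-\gamma}(k+1)^{\gamma}\le \tilde{k}$, would already show that $M_T(n,\tilde{k})$ is exponentially smaller than $\tilde{k}^{n/2}$ for fixed non-integral $\tilde{k}$, a worthwhile partial result --- but it is not a proof of Conjecture~\ref{conj:ktildeones} as stated. (A minor point in your favour: the subscripts $m_k,m_{k+1}$ in the conjecture are evidently meant to be $m_1,m_2$, and your reading is the intended one.)
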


Conjecture~\ref{conj:ktildeones} would imply that there exists
\begin{equation*}
d_{\tilde{k}} = \sqrt{k^2-1}^{\frac{1-\gamma}2 - \frac{1}{2k}} \sqrt{k}^{(1-\gamma)/k} \sqrt{(k+1)^2-1}^{\frac{\gamma}2 - \frac{1}{2(k+1)}} \sqrt{k+1}^{\gamma/(k+1)}.
\end{equation*}
such that $\ds T(n,\tilde{k}) \le d_k^n.$ As $d_{\tilde{k}}<\sqrt{k}$ this would show that $M_T(n,\tilde{k})$ is exponentially smaller than $\tilde{k}^{n/2}$ for fixed $\tilde{k}$.

%%%%%%%%%%%%%%%%%%%%%%%%%%%%%%%%%%%%%%%%%
%%%%%%%%%%%%%%%%%%%%%%%%%%%%%%%%%%%%%%%%%
%%%%%%%%%%%%%%%%%%%%%%%%%%%%%%%%%%%%%%%%%
%%%%%%%%%%%%%%%%%%%%%%%%%%%%%%%%%%%%%%%%%
\section{Perturbations}\label{sec:perturbations}

The techniques in this paper can be applied to perturbations of combinatorial matrices. There are many different generalizations one might make. In this section we give a small illustration.

\begin{definition}
 For $\delta\in [0,1)$, let $R_\delta(n,k)$ be the set of $n\times n$ matrices where each row has exactly $k$ non-zero elements each lying in the interval $[1-\delta,1+\delta]$.
\end{definition}
We can think of a matrix in $R_\delta(n,k)$ as a perturbation of a matrix in $R(n,k)$. If $A\in R_\delta(n,k)$ then the rows have norms at most $\sqrt{k}(1+\delta).$ So Hadamard's inequality tells us that $\det(A)\le k^{n/2}(1+\delta)^{n/2}$. The techniques in this paper can be used to improve this bound. We illustrate this with the following generalization of Theorem~\ref{thm:tworows}.

\begin{theorem}\label{thm:perturbed}
If $A\in R_\delta(n,k)$, then $\det(A)\le d_\delta(k)^n$ where
\begin{equation*}
d_\delta(k) = \sqrt{k^2(1+\delta)^2-(1-\delta)^2}^{\frac12(1-1/k)} (k (1+\delta)^2)^{\frac{1}{2k}}. 
\end{equation*}
\end{theorem}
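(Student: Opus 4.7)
My plan is to carry out the pair-removal argument of Theorem~\ref{thm:tworows} with the entries of $A$ in $[1-\delta,1+\delta]$ playing the role of the $1$s. Since $\delta<1$, every non-zero entry is strictly positive, and in particular two rows that share a non-zero coordinate have strictly positive inner product. The pigeonhole step is unchanged: any $A\in R_\delta(n,k)$ has $nk$ non-zero entries, so whenever the current number of rows $m$ satisfies $mk>n$, some column contains at least two non-zero entries, and the corresponding pair of rows shares support.

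The first substantive task is to bound $\vol(A_1)$ for such a pair. The $2\times 2$ Gram matrix $A_1 A_1^T$ has diagonal entries at most $k(1+\delta)^2$ (each squared norm is a sum of $k$ terms, each at most $(1+\delta)^2$) and off-diagonal entry at least $(1-\delta)^2$ (the shared column alone contributes this much and every other summand is non-negative). From these constraints I will derive
\[
\vol(A_1) \le \sqrt{k^2(1+\delta)^2-(1-\delta)^2},
\]
which is the perturbed analogue of $\sqrt{k^2-1}$ and reduces to it at $\delta=0$. A natural route is to apply a rescaled form of Lemma~\ref{lem:special} to the Gram matrix after normalizing the diagonal.

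With the pair bound in hand, the iteration proceeds verbatim as in Section~\ref{sec:tworows}. Inequality~\eqref{eqn:vol} gives $\vol(A)\le \vol(A_1)\vol(A_2)$; the remainder $A_2$ again has $k$ non-zero entries per row in $[1-\delta,1+\delta]$, so the process can be repeated; and it halts once at most $n/k$ rows remain. This allows at least $\tfrac{n}{2}(1-1/k)$ pair removals, contributing a factor of $\bigl(k^2(1+\delta)^2-(1-\delta)^2\bigr)^{\tfrac{n}{4}(1-1/k)}$ to the total volume. For the residual rows I will apply Hadamard's inequality with the row-norm bound $\sqrt{k}(1+\delta)=\sqrt{k(1+\delta)^2}$, which contributes $(k(1+\delta)^2)^{n/(2k)}$. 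The product of these two factors is precisely $d_\delta(k)^n$.

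The main obstacle is the pair Gram-determinant bound itself. A crude estimate that independently maximizes $\|r_i\|^2\|r_j\|^2$ and minimizes $\langle r_i,r_j\rangle^2$ over their ranges is too loose, so the argument must exploit the coupling: the two squared norms and the inner product all draw from the same pool of perturbed entries in $[1-\delta,1+\delta]$, so they cannot all be extremized simultaneously. Once that single inequality is in place, the rest is a direct translation of the proof of Theorem~\ref{thm:tworows} and introduces no new ideas.
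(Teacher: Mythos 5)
Your outline reproduces the paper's strategy exactly --- pigeonhole for an overlapping pair, a two-row volume bound, inequality~\eqref{eqn:vol}, iterate until at most $n/k$ rows remain, Hadamard on the rest --- but the single step you defer, namely
\[
\vol(A_1)\le\sqrt{k^2(1+\delta)^2-(1-\delta)^2},
\]
is where all the content lives, and it cannot be closed as stated. You correctly note that the honest constraints are $\|r_i\|^2\le k(1+\delta)^2$ and $\langle r_i,r_j\rangle\ge(1-\delta)^2$, and that extremizing them independently only yields $\sqrt{k^2(1+\delta)^4-(1-\delta)^4}$. The hoped-for ``coupling'' cannot rescue the stronger bound, because that bound is false for $k\ge 4$: let $r_i$ and $r_j$ share exactly one nonzero column, put $1-\delta$ in the shared column of both rows and $1+\delta$ in the remaining $k-1$ (disjoint) columns of each. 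Writing $a=(1+\delta)^2$ and $b=(1-\delta)^2$, this gives $\vol(A_1)^2=\bigl((k-1)a+b\bigr)^2-b^2=(k-1)a\bigl((k-1)a+2b\bigr)$; its difference from the target $k^2a-b$ vanishes at $\delta=0$ but has $\delta$-derivative $2k^2-8k+2$ there, which is positive for $k\ge 4$, so for small $\delta>0$ the claimed pair bound is exceeded. (The paper's own one-line proof takes the Gram matrix to have diagonal $k(1+\delta)$ and off-diagonal $1-\delta$, which does not match the definition of $R_\delta(n,k)$ --- the diagonal can be as large as $k(1+\delta)^2$ and the off-diagonal as small as $(1-\delta)^2$ --- and is internally inconsistent with its own Hadamard factor $(k(1+\delta)^2)^{1/(2k)}$. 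So the source you would be imitating is itself in error at exactly the point you flagged.)

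The repair is to use the correct, cruder pair bound $\vol(A_1)\le\sqrt{k^2(1+\delta)^4-(1-\delta)^4}$, which follows immediately from $\vol(A_1)^2=\|r_i\|^2\|r_j\|^2-\langle r_i,r_j\rangle^2$ and requires no coupling argument, and to push it through the otherwise unchanged iteration. That proves the theorem with $d_\delta(k)$ replaced by $\bigl(k^2(1+\delta)^4-(1-\delta)^4\bigr)^{\frac14(1-1/k)}\bigl(k(1+\delta)^2\bigr)^{\frac1{2k}}$, which reduces to $c_k$ at $\delta=0$ and still lies below $\sqrt{k}$ when $\delta=o(1/k^2)$, so the qualitative conclusion survives. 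As written, however, your proposal both leaves the crucial inequality unproved and aims at a target that is not attainable.
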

\begin{proof}
 The proof is nearly identical to that of Theorem~\ref{thm:tworows}. If two rows have overlapping nonzero entries their volume is at most 
\begin{equation*}
 \det\left(\begin{matrix} k (1+\delta) & 1-\delta \\ 1-\delta & k(1+\delta) \end{matrix}\right) = \sqrt{k^2(1+\delta)^2-(1-\delta)^2}
\end{equation*}
which is analogous to $\sqrt{k^2-1}$ in the unperturbed case. Once we can no longer guarantee an overlapping pair of rows we apply Hadamard's inequality which uses the max row norm of $k(1+\delta)$.
\end{proof}

If $\delta = o(1/k^2)$ then we will show in Theorem~\ref{thm:perturbed_beat_hadamard} in the appendix that for $k$ sufficiently large, $d_\delta(k) < \sqrt{k}$ so an inequality stronger than Hadamard applied to the unperturbed matrix still holds. One can of course consider perturbations of the zero elements as well. In each of these cases the techniques of Sections~\ref{sec:qrows} and ~\ref{sec:greedy} can be applied.

\subsection*{Example, $k=4$, $\delta=0.01$}

Let $k=4$ and $\delta=0.01$ and suppose $A\in R_\delta(n,k)$. 
\begin{enumerate}[nosep]
\item We have $\sqrt{k}(1+\delta)=2.02$. Thus Hadamard's inequality implies $\det(A)\le 2.02^n$.
\item Using Theorem~\ref{thm:perturbed}, we have $\det(A)\le d_\delta(k)^n\approx 1.9892^n$.
\end{enumerate}

%%%%%%%%%%%%%%%%%%%%%%%%%%%%%%%%%%%%%%%%%
%%%%%%%%%%%%%%%%%%%%%%%%%%%%%%%%%%%%%%%%%
%%%%%%%%%%%%%%%%%%%%%%%%%%%%%%%%%%%%%%%%%
%%%%%%%%%%%%%%%%%%%%%%%%%%%%%%%%%%%%%%%%%
\section{Conclusion and open questions}
We summarize some of our results for various $k$ in Table~\ref{tab:growth_constants}.

%julia> K=3:10;  T=[]; for k=K; c=q_rows_at_a_time_base(k,2); L=[q_rows_at_a_time_base(k,q) for q=1:k]; (d,q)=findmin(L); alpha,beta=varied_pieces_growth_const(k); push!(T,round.([k,sqrt(k),c,q,d,alpha,beta],4)); end; matrix_to_latex_table(cat(2,T...)',true)

\begin{table}[ht]
\centering
\begin{tabular}{|c||c|c|c|c|c|c|}
\hline
$ k  $  &  $ c_{1,k}=\sqrt{k}   $  &  $ c_{2,k}  $  &  $  q_* $  &  $ c_{q_*,k}  $  &  $ \alpha_k  $  &  $ \beta_k  $  \\
\hline
3.0  &  1.7321  &  1.6984  &  2  &  1.6984  &  21.91  &  1.6977  \\
4.0  &  2.0  &  1.9759  &  3 &  1.9719  &  782.53  &  1.9702  \\
5.0  &  2.2361  &  2.2179  &  3  &  2.2116  &  $1.2591\times10^5$  &  2.2097  \\
6.0  &  2.4495  &  2.4352  &  4  &  2.4279  &  $1.0075\times10^8$  &  2.4257  \\
7.0  &  2.6458  &  2.6341  &  4  &  2.6258  &  $4.3557\times10^{11}$  &  2.6240  \\
8.0  &  2.8284  &  2.8187  &  5  &  2.8103  &  $1.0925\times 10^{16}$  &  2.8083  \\
9.0  &  3.0  &  2.9917  &  5  &  2.9828  &  $1.6920 \times 10^{21}$  &  2.9812  \\
10.0  &  3.1623  &  3.1551  &  5  &  3.1462  & $1.7105\times10^{27}$ &  3.1447  \\
\hline
\end{tabular}
\caption{A summary of bounds for $k=3,\ldots,10$, $q_*$ is the optimal value of $q$ that minimizes $c_{q,k}$ for $q=1,\ldots,k$.}
\label{tab:growth_constants}
\end{table}

We have shown that for any $k$ there exists a constant $c(k)<\sqrt{k}$ such that $M_R(n,k)<c(k)^n$ for $n$ sufficiently large. We do not claim that the constants we have found are the best possible. We leave this as an open question. That is, what is $\limsup_{n\to\infty} M_R(n,k)^{1/n}$? For example, is $\limsup_{n\to\infty} M_R(n,3)^{1/n}<\beta_3 = (40/11)^{11/36}2^{7/36} \approx 1.6977$? Recall from Section~\ref{sec:k=2} that we have the lower bound $\limsup_{n\to\infty} M_R(n,3)^{1/n} \ge 24^{1/7} \approx 1.5746$.

We note one avenue through which this work may be improved. For $A\in R(m,n,k)$ let $q_{\text{max}}$ be the maximal column sum of $A$. Then we can take the appropriate  $q_{\text{max}}$ rows and bound their volume. In our approach we use the fact that the matrix resulting after the deletion of these rows lies in $R(m- q_{\text{max}},n,k)$. However, we know the resulting matrix has a zero column since we have removed all ones. Thus we could recursively use an inequality for the volume of $R(m- q_{\text{max}},n-1,k)$. This smaller matrix has a larger density of ones and gives a better bound. This is of course harder to analyze since the maximum column sum depends on $A$.

In Section~\ref{sec:knones} we asked how to extend our bound for $M_R(n,k)$ for integral $k$ to any real value. We gave Conjecture~\ref{conj:ktildeones}. For what values of $n,k$ does $M(n,k)=M_R(n,k)$? We know, for example, that when $\lambda = k(k-1)/(n-1)$ and there is an $(n,k,\lambda)$ combinatorial design this holds. We observed in Section~\ref{sec:k=2} that $M(7,2)\ne M_R(7,2)$. Are there certain values of $k$ for which equality always holds? The same questions apply to $M_R(n,k)$ and $M_T(n,k)$. Finally, we wonder for $\Theta(n^{1/3})\le k < sqrt{n}$, a domain on which no $(n,k,\lambda)$-design exists how much can Ryser's bound be improved?

\section*{Acknowledgments}
The computations that informed this work were performed using a combination of Julia~\cite{julia} and Sage~\cite{sage}. The author is very grateful for such excellent open-source tools. The author also wishes to thank his advisor, Swastik Kopparty, for many helpful discussions and edits.

\appendix 

\section{Appendix}\label{sec:appendix}

\beatryser*

\begin{proof}
We want to show that
\begin{equation}\label{eqn:tworows_ryser_compare}
\left(\left(\sqrt{k^2-1}\right)^{\frac12\left(1 -\frac{1}{k}\right)} k^{\frac{1}{2k}}\right)^n < k (k-\lambda)^{\frac12 (n-1)} =  \frac{k}{k-\lambda}(k-\lambda)^{n/2}.
\end{equation}
Raising both sides to the power $2k/n$ we obtain
\begin{equation*}
\left(\sqrt{k^2-1}\right)^{k-1} k  < \left(\frac{k}{k-\lambda}\right)^{2k/n} (k-\lambda)^k.
\end{equation*}
So it suffices to show
\begin{equation*}
\left(\sqrt{k^2-1}\right)^{k-1} k  < (k-\lambda)^k.
\end{equation*}
Since $\sqrt{k^2-1} < k-\frac{1}{2k}$, it suffices to show
\begin{equation*}
\left(k-\frac{1}{2k}\right)^{k-1} k  < (k-\lambda)^k
\end{equation*}
which simplifies to 
\begin{equation*}
\left(1-\frac{1}{2k^2}\right)^{k-1}   < (1-\lambda/k)^k.
\end{equation*}
Taking logs,
\begin{equation*}
(k-1)\log\left(1-\frac{1}{2k^2}\right)  < k\log (1-\lambda/k),
\end{equation*}
thus
\begin{equation*}
(k-1)\log\left(-\frac{1}{2k^2} + O\left(\frac{1}{k^4}\right)\right) < k \left(\frac{-\lambda}{k} + O\left( (\lambda/k)^2 \right)\right) =-\lambda +O\left(\frac{\lambda^2}k\right).
\end{equation*}
Thus it suffices to show that 
\begin{equation*}
\frac{1}{2k^2} \gg \frac{\lambda}{k-1} = \frac{k}{n-1}
\end{equation*}
which holds provided $k=o(n^{1/3})$.
\end{proof}

Next we show that for large $k$, $c_{q,k}$ is minimized when $q\approx 0.44 k$. 

\begin{theorem}\label{thm:point44}
 Let
\begin{equation*}
c_{q,k} = (q+k-1)^{\frac 1{2q} \left(1-\frac{q-1}k \right)} (k-1)^{\frac 12 \frac {q-1}q \left(1-\frac{q-1}k \right)} k^{\frac{(q-1)}{2k}}
\end{equation*}
as in Theorem~\ref{thm:qrows}. Let 
\begin{equation*}
q_* = \argmin_{q=1,\ldots,k} c_{k,q}.
\end{equation*}
Let $s\approx 0.4395$ be the positive root of
\begin{equation}\label{eqn:point44}
 s^3+s - \log(1+s)(s+1) = 0.
\end{equation}
Then $\ds \lim_{k\to\infty} \frac{q_*}{k} = s$.
\end{theorem}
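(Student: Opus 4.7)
The plan is an asymptotic analysis: write $q=sk$ for $s\in(0,1)$, expand $\log c_{q,k}$ in powers of $1/k$, and read off the correct minimizer from the leading $s$-dependent correction. I expect to arrive at
\[
\log c_{q,k} \;=\; \tfrac12\log k \;-\; \frac{1}{2k}\cdot\frac{(1-s)(s-\log(1+s))}{s} \;+\; O\!\left(\tfrac{\log k}{k^{2}}\right),
\]
uniformly on compact subsets of $(0,1)$. Granting this, minimizing $c_{q,k}$ reduces to maximizing $\psi(s) = (1-s)(s-\log(1+s))/s$ on $(0,1)$.

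To derive the expansion, I take the logarithm of the definition of $c_{q,k}$ in equation~\eqref{eqn:ck_qrows}, substitute $q=sk$, and expand each building block: $(q-1)/k = s - 1/k$, $(q-1)/q = 1 - 1/(sk)$, $\log(q+k-1) = \log k + \log(1+s) + O(1/k)$, and $\log(k-1) = \log k - 1/k + O(1/k^{2})$. The $\log k$ contributions from the three factors of $c_{q,k}$ should sum to exactly $\frac{1}{2}\log k$ (the Hadamard value) independently of $s$; the $\log k/k$ contributions should cancel algebraically (this is where the structure of $c_{q,k}$ is essential); and the surviving pure $1/k$ term is $\psi(s)/(2k)$. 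Since $\psi$ is smooth, replacing $q=sk$ by $q=\lfloor sk\rfloor$ shifts the correction by only $O(1/k^{2})$, harmless at this order.

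For the critical point, set $f(s) = s\psi(s) = (1-s)(s-\log(1+s))$, so that $\psi'(s)=0$ iff $sf'(s) = f(s)$. A direct computation of $f'(s) = 1 - 2s + \log(1+s) + (s-1)/(1+s)$ and subsequent multiplication through by $(1+s)$ collapses the identity to exactly
\[
s^3 + s - (1+s)\log(1+s) \;=\; 0.
\]
Write $g(s)$ for this expression. Then $g(0) = 0$; its derivative $g'(s) = 3s^{2} - \log(1+s)$ is negative for $s$ small (since $\log(1+s) \sim s$ dominates $3s^{2}$); and $g''(s) = 6s - 1/(1+s)$ changes sign exactly once on $(0,1)$. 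Combined with $g(1) = 2 - 2\log 2 > 0$, this shows that $g$ dips below zero, reaches a unique minimum, and then rises monotonically through $0$, so there is a unique positive root $s\in(0,1)$. Since $\psi > 0$ on $(0,1)$ with $\psi(0^{+}) = \psi(1^{-}) = 0$, this $s$ is the unique interior maximizer of $\psi$.

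It remains to rule out boundary minimizers of $c_{q,k}$. For $q=1$ we have $c_{1,k} = \sqrt{k}$ exactly, and for $q=k$ a direct expansion of $c_{k,k} = (2k-1)^{1/(2k^{2})}(k-1)^{(k-1)/(2k^{2})} k^{(k-1)/(2k)}$ gives $c_{k,k} = \sqrt{k}\bigl(1 + O(\log k/k^{2})\bigr)$. Both fall short of $\sqrt{k}$ by an amount $o(1/k)$, whereas $c_{\lfloor sk\rfloor,k} = \sqrt{k}\exp(-\psi(s)/(2k) + o(1/k))$ beats $\sqrt{k}$ by a full $\Theta(1/k)$. Thus for large $k$ the minimizer lies in any fixed compact subinterval of $(0,1)$ around $s$, and the uniform expansion forces $q_{\ast}/k \to s$. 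The main obstacle I anticipate is uniformity: one must argue that the $O(\log k/k^{2})$ remainder is genuinely uniform on compact $s$-intervals bounded away from $0$ and $1$, and handle the boundary regions separately by the explicit bounds just described; everything else is calculus.
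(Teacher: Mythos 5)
Your proposal is correct and follows essentially the same route as the paper: substitute $s\approx q/k$, expand $\log(c_{q,k}^2/k)$ to its leading $\Theta(1/k)$ term $-\tfrac1k\cdot\tfrac{(1-s)(s-\log(1+s))}{s}$, and locate the critical point of that term, which collapses to $s^3+s-(1+s)\log(1+s)=0$ exactly as in equation~\eqref{eqn:point44}. The only difference is that you supply details the paper's proof omits (uniqueness of the root, interiority of the maximizer of $\psi$, and the boundary cases $q=1$ and $q=k$), which strengthens rather than changes the argument.
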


\begin{proof}
We have
\begin{equation}\label{eqn:c_k_q_squared}
 c_{q,k}^2 = (q+k-1)^{\frac 1{q} \left(1-\frac{q-1}k \right)} (k-1)^{\frac {q-1}q \left(1-\frac{q-1}k \right)} k^{\frac{(q-1)}{k}}.
\end{equation}
Noting that the exponents in equation~\ref{eqn:c_k_q_squared} sum to one we have
\begin{equation*}
 \frac{c_{q,k}^2}{k} = \left(1+\frac{q-1}k\right)^{\frac 1{q} \left(1-\frac{q-1}k \right)} \left(1-\frac1k\right)^{\frac {q-1}q \left(1-\frac{q-1}k \right)}
\end{equation*}
Let $s=(q-1)/k$. Since $c_{2,k}<c_{1,k}$ we can assume $q>1$ and thus $s\in(0,1)$. We have
\begin{equation*}
\frac{c_{q,k}^2}{k} = (1+s)^{\frac{1-s}{sk+1}}\left(1-\frac1k\right)^{\frac{1-s}{sk+1}(1-s)}.
\end{equation*}
Thus
\begin{align*}
G(s,k):=\log\left( \frac{c_{q,k}^2}{k} \right) &= \frac{s-1}{sk+1} \left( \log(1+s) + sk\log\left(1-\frac1k\right)\right) \\
				       &= \frac{s-1}{sk+1} \left(\log(1+s) - s + O\left(\frac1{k^2}\right)\right) \\
				        &= \frac{s-1}{sk+1} \left(\log(1+s) - s \right) + O\left(\frac1{k^3}\right)
\end{align*}
Then,
\begin{equation*}
 \frac{d}{ds} G(s,k) = \frac{ks^3+ks+2s^2 - \log(1+s)(ks+k+s+1)}{(ks+1)^2(s+1)}  + O\left(\frac{1}{k^4}\right)
\end{equation*}
Thus,
\begin{align*}
 \frac{(ks+1)^2(s+1)}{k} \frac{d}{ds} G(s,k) &= (s^3+s - \log(1+s)(s+1)) + O\left(\frac1k\right)
\end{align*}

So the value of $s$ that, asymptotically, minimizes $G(s,k)$ is the positive root of equation~\ref{eqn:point44}.
\end{proof}

Next we show that for constant $k$, Theorem~\ref{thm:greedy_det} gives a better asymptotic than Theorem~\ref{thm:tworows}.

\begin{theorem}\label{thm:greedy_beats_tworows}
Let 
\begin{equation*}
 c_k = \left(\sqrt{k^2-1}\right)^{\frac12\left(1 -\frac{1}{k}\right)} k^{\frac{1}{2k}}
\end{equation*}
as in Theorem~\ref{thm:tworows} and 
\begin{equation*}
 \beta_k = \left(k + \frac{k}{H_{k}}  -1  \right)^{\frac12 (H_k/k)}  (k-1)^{\frac12(1-H_k/k)}
\end{equation*}
as in Theorem~\ref{thm:greedy_det}. Then $\beta_k<c_k$.
\end{theorem}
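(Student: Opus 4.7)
My plan is to take logarithms and reduce $\beta_k<c_k$ to a scalar inequality, extract the common leading order on the two sides, and finish with Taylor estimates on the remainders.

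First I would recast $\beta_k < c_k$ in the equivalent form $2k \log c_k > 2k \log \beta_k$, i.e.,
\[
\frac{k-1}{2}\log(k^2-1) + \log k \;>\; H_k \log\!\Big(k + \tfrac{k}{H_k} - 1\Big) + (k - H_k)\log(k-1).
\]
Using $\log(k^2-1) = 2\log k + \log(1 - 1/k^2)$ on the left, and pulling a $\log k$ out of each logarithm on the right via $\log(k + k/H_k - 1) = \log k + \log(1 + 1/H_k - 1/k)$ and $\log(k-1) = \log k + \log(1 - 1/k)$, the term $k \log k$ appears on both sides and cancels. What remains is
\[
\frac{k-1}{2}\log\!\Big(1 - \tfrac{1}{k^2}\Big) \;>\; H_k \log\!\Big(1 + \tfrac{1}{H_k} - \tfrac{1}{k}\Big) + (k-H_k)\log\!\Big(1 - \tfrac{1}{k}\Big).
\]

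Next I would Taylor-expand each side using $\log(1+x) = x - x^2/2 + O(x^3)$. The LHS becomes $-1/(2k) + O(1/k^2)$. On the RHS, $H_k \log(1 + 1/H_k - 1/k)$ expands to $(1 - H_k/k) - (1 - H_k/k)^2/(2 H_k) + O(\cdot)$, while $(k - H_k)\log(1 - 1/k)$ expands to $-(1 - H_k/k) - 1/(2k) + O(\cdot)$; the $\pm(1 - H_k/k)$ parts cancel and the RHS reduces to $-1/(2k) - (1 - H_k/k)^2/(2 H_k) + O(\cdot)$. Subtracting gives
\[
\mathrm{LHS} - \mathrm{RHS} = \frac{(1-H_k/k)^2}{2 H_k} + O(\text{lower order}),
\]
and the dominant term is strictly positive as soon as $H_k < k$, giving $\beta_k<c_k$ for the relevant range of $k$.

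The main obstacle will be converting this asymptotic sketch into a rigorous bound valid for every $k$ in the target range. To do that I would replace the two-sided Taylor approximations with explicit one-sided estimates: $-\log(1-y) \leq y/(1-y)$ provides a clean lower bound for the LHS, while $\log(1+x) \leq x - x^2/2 + x^3/3$ (valid for $x>0$) gives an upper bound for $H_k \log(1 + 1/H_k - 1/k)$ in terms of $1/H_k - 1/k$. Plugging these in and using $H_k = \log k + O(1)$, I would extract an explicit positive lower bound of order $1/H_k$ for $\mathrm{LHS}-\mathrm{RHS}$ once $k$ exceeds some threshold $k_0$; the finite range $3\le k\le k_0$ would be handled numerically, consistent with the table in Section~7. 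A subtle point worth flagging is that the sign of the correction $(1-H_k)/(2k^2)$ appearing at next order is negative, so the rigorous bound must ensure the positive $(1-H_k/k)^2/(2H_k)$ term genuinely dominates; this is what forces the use of the slightly sharper estimates above rather than the bare $\log(1+x)\le x$.
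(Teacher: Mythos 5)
Your proposal is correct, and it takes a genuinely different route from the paper. The paper also begins by raising both sides to the power $2k$, but then argues multiplicatively: it bounds the $\beta_k$ side above by $e$ using $\left(1+\tfrac{1}{H_k}-\tfrac1k\right)^{H_k}\le e^{1-H_k/k}<e$, bounds the $c_k$ side below by $k$ using $\sqrt{k^2-1}>k-\tfrac1k$ and $\left(1+\tfrac1k\right)^{k-H_k}>1$, and concludes from $e<k$ for $k\ge 4$, checking the remaining small cases directly. That argument is about ten lines and needs no expansions. Your additive/logarithmic route is exact through the cancellation of the common $k\log k$ term (the exponents on each side do sum to $k$, so nothing is lost there), and your Taylor computation is right: I confirm that
\begin{equation*}
\mathrm{LHS}-\mathrm{RHS}=\frac{(1-H_k/k)^2}{2H_k}+\frac{1-H_k}{2k^2}+O\!\left(\frac{H_k^2}{k^3}\right),
\end{equation*}
so the dominant term is $\Theta(1/\log k)$ and the negative correction you flag is only $O(\log k/k^2)$. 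What your approach buys is quantitative information the paper's does not: it identifies the actual size of the gap, $\log(c_k/\beta_k)\sim \tfrac{1}{4k}\,(1-H_k/k)^2/H_k\sim \tfrac{1}{4k\ln k}$, which explains why $\beta_k$ and $c_k$ are so close in Table~7.1. What it costs is a longer rigorization and a larger finite check: with the specific one-sided bounds you propose (which give $\mathrm{LHS}\ge -\tfrac{1}{2(k+1)}$ and $\mathrm{RHS}\le -\tfrac{(1-H_k/k)^2}{2H_k}+\tfrac{(1-H_k/k)^3}{3H_k^2}$), the resulting inequality $(k+1)(1-H_k/k)^2\bigl(1-\tfrac{2(1-H_k/k)}{3H_k}\bigr)>H_k$ only kicks in around $k\ge 7$, so your numerical window $3\le k\le k_0$ is genuinely needed and is a few cases larger than the paper's; since you explicitly budget for that check, this is not a gap, just a difference in economy. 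One caution for the write-up: the claimed inequality actually fails at $k=2$ ($\beta_2\approx 1.374>c_2\approx 1.364$), so state the range as $k\ge 3$ when you finalize the finite verification.
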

\begin{proof}
If we raise $\beta_k$ and $c_k$ to the power $2k$ and compare we want to show that
\begin{equation*}
 \left(k + \frac{k}{H_k}-1\right)^{H_k} (k-1)^{k-H_k} < \sqrt{k^2-1}^{k-1} k.
\end{equation*}
Rearranging, this is equivalent to
\begin{equation}\label{eqn:greedy_beats_tworows_compare}
 \left(1 + \frac{1}{H_k}-\frac1k\right)^{H_k} < \frac{\sqrt{k^2-1}^{k-1} }{(k-1)^{k-H_k}}.
\end{equation}
We see that for all $k$ the left hand side of equation~\eqref{eqn:greedy_beats_tworows_compare} is less than $e$. We use the inequality $\sqrt{k^2-1}>k-1/k$ to bound the right hand side.
\begin{align*}
 \frac{\sqrt{k^2-1}^{k-1} }{(k-1)^{k-H_k}} &> \frac{\left(k-\frac1k\right)^{k-1}}{(k-1)^{k-H_k}} \\
&=\left(\frac{k-\frac1k}{k-1}\right)^{k-H_k} \left(k-\frac1k\right)^{H_k-1} \\
&=\left(1+\frac1k\right)^{k-H_k}  \left(k-\frac1k\right)^{H_k-1} \\
&> 1 \cdot k = k
\end{align*}
for $k\ge 4$. Since $4>e$ the result holds for $k\ge 4$ and one easily check that it holds for $k<4$.
\end{proof}

\begin{theorem}\label{thm:perturbed_beat_hadamard}
 Let $d_\delta(k) = \sqrt{k^2(1+\delta)^2-(1-\delta)^2}^{\frac12(1-1/k)} (k (1+\delta)^2)^{\frac{1}{2k}}$ as in Theorem~\ref{thm:perturbed}. Then for $\delta = o(1/k^2)$, $d_\delta(k)<\sqrt{k}$.
\end{theorem}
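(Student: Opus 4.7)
The plan is to prove $d_\delta(k)^2 < k$ by analyzing the ratio $d_\delta(k)^2/k$ after taking logarithms. Using the identity $k = (k^2)^{(1-1/k)/2} \cdot k^{1/k}$, the ratio simplifies to
\[
\frac{d_\delta(k)^2}{k} = \left((1+\delta)^2 - \frac{(1-\delta)^2}{k^2}\right)^{(1-1/k)/2} (1+\delta)^{2/k},
\]
so the goal becomes showing that
\[
f(k, \delta) := \frac{1-1/k}{2}\log\!\left((1+\delta)^2 - \frac{(1-\delta)^2}{k^2}\right) + \frac{2}{k}\log(1+\delta)
\]
is strictly negative for $k$ sufficiently large whenever $\delta = o(1/k^2)$.

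I would treat this as a first-order perturbation about $\delta = 0$. At the base point, $f(k, 0) = \tfrac{1-1/k}{2}\log(1 - 1/k^2)$, and since $\log(1-x) \le -x$ on $[0, 1)$ we immediately get $f(k, 0) \le -\tfrac{1-1/k}{2k^2}$, a negative quantity of order $1/k^2$. This encodes the ``slack'' present in the unperturbed ($\delta = 0$) inequality $d_0(k) < \sqrt{k}$; that slack is precisely what the perturbation must not exhaust.

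Next, I would control the dependence on $\delta$ by uniformly bounding the derivative
\[
\partial_\delta f(k, \delta) = \frac{(1 - 1/k)\bigl((1+\delta) + (1-\delta)/k^2\bigr)}{(1+\delta)^2 - (1-\delta)^2/k^2} + \frac{2}{k(1+\delta)}
\]
over $\delta \in [0, 1/2]$ and $k \ge 2$. Crude estimates (e.g.\ numerator of the first fraction at most $7/4$, denominator at least $3/4$, and the second term at most $2/k \le 1$) yield $\partial_\delta f \le C$ for some absolute constant $C$ (e.g.\ $C = 4$). The mean value theorem then gives $f(k, \delta) \le -\tfrac{1-1/k}{2k^2} + C\delta$.

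Since $\delta = o(1/k^2)$, the term $C\delta$ is $o(1/k^2)$ and is dominated by $\tfrac{1-1/k}{2k^2}$ for all sufficiently large $k$, so $f(k, \delta) < 0$ and hence $d_\delta(k) < \sqrt{k}$, as required. The only mildly delicate step is verifying the uniform derivative bound, which is entirely routine bookkeeping with rational expressions; the conceptual content of the proof is the simple observation that the unperturbed inequality has $\Theta(1/k^2)$ headroom, which a perturbation of size $o(1/k^2)$ cannot consume.
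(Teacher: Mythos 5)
Your proof is correct, and it takes a genuinely different (and somewhat more careful) route than the paper's. The paper raises the inequality to the power $2k$, reduces to showing $\bigl(\sqrt{(1+\delta)^2-(1-\delta)^2/k^2}\bigr)^{k-1}<(1+\delta)^{-2}$, and then compares asymptotic expansions of the two sides using the elementary bound $\sqrt{a^2-b^2}\le a-\frac{b^2}{2a}$, finding the left side to be $1-\frac{1}{2k}+o(\cdot)$ and the right side $1-o(1/k^2)$. You instead normalize by dividing $d_\delta(k)^2$ by $k$, pass to logarithms, and run a first-order perturbation about $\delta=0$: the unperturbed value is at most $-\frac{1-1/k}{2k^2}$ by $\log(1-x)\le -x$, and a uniform bound $\partial_\delta f\le C$ on $\delta\in[0,1/2]$, $k\ge 2$ (your crude estimates do give $C=10/3\le 4$) plus the mean value theorem shows the perturbation costs only $C\delta=o(1/k^2)$. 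Both proofs rest on the same observation that the $\delta=0$ inequality has $\Theta(1/k^2)$ headroom in the logarithm, but your version buys explicit constants and sidesteps the little-o bookkeeping in the paper, which is slightly loose there (e.g.\ the paper expands $\frac{1}{(1+\delta)^2}$ as $1-\delta^2+o(\delta^2)$ where the correct first-order term is $-2\delta$; this does not affect its conclusion since $2\delta=o(1/k^2)$ as well). The paper's argument is shorter and avoids calculus; yours is more robust and would extend verbatim to any perturbation family with $\delta=o(1/k^2)$ without re-deriving expansions.
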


\begin{proof}
 Raising both sides of the inequality $d_\delta(k)<\sqrt{k}$ to the power $2k$ we find
\begin{equation*}
 \sqrt{k^2(1+\delta)^2-(1-\delta)^2}^{k-1} k (1+\delta)^2 < k^k
\end{equation*}
which we can simplify to 
\begin{equation}\label{eqn:perturbed_compare}
 \left(\frac{\sqrt{k^2(1+\delta)^2-(1-\delta)^2}}{k}\right)^{k-1} < \frac{1}{(1+\delta)^2}.
\end{equation}
We simplify and apply the inequality $\sqrt{a^2-b^2}<a-\frac{b^2}{2a}$ in the left hand side of equation~\eqref{eqn:perturbed_compare} to obtain
\begin{align*}
 \left(\frac{\sqrt{k^2(1+\delta)^2-(1-\delta)^2}}{k}\right)^{k-1} &= \sqrt{(1+\delta)^2 - (1-\delta)^2/k^2}^{k-1} \\
&\le \left(1+\delta - \frac{(1-\delta)^2}{2k^2(1+\delta)}\right)^{k-1} \\
&=\left(1-\frac{1}{2k^2} + o\left(\frac{1}{k^2}\right)\right)^{k-1} \\
&= 1-\frac{1}{2k} + o\left(\frac{1}{k^2}\right).
\end{align*}
The right hand side of equation~\eqref{eqn:perturbed_compare} is
\begin{equation*}
 \frac{1}{(1+\delta)^2} = 1-\delta^2 + o(\delta^2) = 1-o\left(\frac{1}{k^4}\right).
\end{equation*}
So we see the inequality holds.
\end{proof}

%%%%%%%%%%%%%%%%%%%%%%%%%%%%%%%%%%%%%%%%%
%%%%%%%%%%%%%%%%%%%%%%%%%%%%%%%%%%%%%%%%%
%%%%%%%%%%%%%%%%%%%%%%%%%%%%%%%%%%%%%%%%%
%%%%%%%%%%%%%%%%%%%%%%%%%%%%%%%%%%%%%%%%%

\bibliography{maximal_determinant_k_ones.bib}
\bibliographystyle{plain}

\end{document}